\documentclass[12pt, a4paper]{article}

\usepackage{amsfonts, amsmath, amssymb, amsthm}
\usepackage{changepage}
\usepackage{fullpage}
\usepackage{hyperref}
\usepackage{mathabx}
\usepackage{mathdots}
\usepackage{parskip}
\usepackage{tikz}
\usepackage{xurl} 
\usepackage{lipsum}
\usepackage{comment}
\usepackage[shortlabels]{enumitem}

\usetikzlibrary{arrows}

\DeclareMathOperator{\GF}{GF}

\theoremstyle{plain}
\newtheorem{theorem}{Theorem}
\newtheorem{corollary}[theorem]{Corollary}
\newtheorem{proposition}[theorem]{Proposition}
\newtheorem{lemma}[theorem]{Lemma}

\theoremstyle{definition}
\newtheorem{method}[theorem]{Method} 
\newtheorem{remark}[theorem]{Remark}
\newtheorem{example}[theorem]{Example}

\begin{document}

\title{On the representation of C-recursive integer sequences by arithmetic terms}

\author{Mihai Prunescu \footnote{Research Center for Logic, Optimization and Security (LOS) (Faculty of Mathematics and Computer Science, University of Bucharest, Academiei 14, Bucharest (RO-010014), Romania), e-mail: {\tt mihai.prunescu@gmail.com}. Simion Stoilow Institute of Mathematics of the Romanian Academy (Research unit 5, P.\ O.\ Box 1-764, Bucharest (RO-014700), Romania), e-mail: {\tt mihai.prunescu@imar.ro}. Institute of Logic and Data Science (Bucharest, Romania).}, Lorenzo Sauras-Altuzarra \footnote{{Vienna University of Technology (Vienna, Austria), e-mail: {\tt lorenzo@logic.at}.}}}
\date{}
\maketitle

\begin{abstract} We show that, if an integer sequence is given by a linear recurrence of constant rational coefficients, then it can be represented by the difference of two arithmetic {terms that} do not contain any irrational constant. We apply our methods to various Lucas sequences (including the classical Fibonacci sequence), to the sequence of solutions of Pell's equation and to some {other C-recursive sequences of order three}. \end{abstract}

AMS Subject Classification: {39A06} (primary), 11B37, {11B39}.

Keywords: Fibonacci sequence, generating function, {hypergeometric closed form}, Kalmar function, Lucas sequence, Pell's equation.

Note: this is a pre-print. The final version of the present article will be published in Journal of Difference Equations and Applications.

{\section{Introduction} \label{SectionIntroduction}}

{In this work, we present a new representation technique for integer sequences that are solutions of linear difference equations with constant rational coefficients. After providing some basic notions in Section \ref{SectionPreliminary} and explaining the importance of the arithmetic terms (the expressions that we use as representations) in Section \ref{SectionHypergeometric}, we show a couple of foundational results in Section \ref{SectionExtraction} and explain the representation procedure in Section \ref{SectionCRecursive}. We apply this method to some famous integer sequences in Section \ref{SectionLucas} and Section \ref{SectionOther}, and we conclude the article with a small summary of key ideas in Section \ref{SectionConclusions} and some directions for further research in Section \ref{SectionFutureWork}.}

{\section{Preliminary concepts} \label{SectionPreliminary}}

In this article we refer to any non-negative integer as a \textbf{natural number}, we denote the set of natural numbers by {$ \mathbb{N} $. In} addition, we consider that the definition domain of a sequence is $ \mathbb{N} $.

Given a non-empty set $ X $, a superset $ Y $ of $ X $ and a non-empty set $ F $ of finitary operations on $ Y $, the \textbf{inductive closure} of $ X $ with respect to $ F $ is the minimum set $ C $ such that $ X \subseteq C \subseteq Y $ and, if $ r $ is a positive integer, $ f $ is an $ r $-ary operation in $ F $ and $ \vec{c} \in C^r $, then $ f ( \vec{c} ) \in C $ (cf.\ Enderton \cite[Section 1.4]{Enderton}).

The \textbf{truncated subtraction}, which is denoted by $ \dotdiv $, is the binary operation given by $$x \dotdiv y = \max ( x - y , 0 ) $$ (see {Vereshchagin} \& Shen \cite[p.\ 141]{VereshchaginShen}).

Given {an integer $ r \geq 1 $}, an $ r $-variate \textbf{arithmetic term} in variables $ n_1 $, $ \ldots $, $ n_r $ is an element of the inductive closure of $ \mathbb{N} \cup \{ n_1 , \ldots , n_r \} $ with respect to the binary operations given by \[ x + y , x \dotdiv y , x \cdot y , \left\lfloor x / y \right\rfloor , x^y \] (cf.\ {Prunescu \cite{Prunescu}} and Prunescu \& Sauras-Altuzarra {\cite{PrunescuSaurasAltuzarra, PrunescuSaurasAltuzarra2}}).

Note that, as $$ x \bmod y = x \dotdiv ( y \cdot \left\lfloor x / y \right\rfloor ) , $$ we also use this binary operation.

It is important to remark that we follow the conventions $ 0^0 = 1 $ (see Mendelson \cite[Proposition 3.16]{Mendelson}) and $ \left\lfloor x / 0 \right\rfloor = 0 $ (see {Mazzanti \cite[Subsection 2.1]{Mazzanti}}), so we have $ x \bmod 0 = x $ and $ x \bmod 1 = 0 $.

A univariate \textbf{Kalmar function} is a computable {sequence of natural numbers} whose deterministic computation time is upper-bounded by some sequence of the form $$ 2^{2^{\iddots^{2^n}}} $$ (see Marchenkov \cite[Introduction]{Marchenkov} and Oitavem \cite[Introduction]{Oitavem}).

Most of the usual {sequences of natural numbers} in mathematics are Kalmar functions {(see Mazzanti \cite[Introduction]{Mazzanti})}, and Mazzanti {\cite[Theorem 4.6]{Mazzanti}} proved that every Kalmar function can be represented by an arithmetic term (of the same number of arguments).

Given a ring $ R $, a sequence of terms of $ R $ that satisfies a \textit{homogeneous linear recurrence} of constant \textit{coefficients} in $ R $ (i.e.\ a sequence {$ s ( n ) $ of elements of $ R $} such that, for some {integer $ d \geq 1 $} and $ d $ elements $ \alpha_1 $, $ \ldots $, $ \alpha_d $ of $ R $, we have that $ \alpha_d $ is non-zero and $$ s ( n + d ) + \alpha_1 s ( n + d - 1 ) + \ldots + \alpha_d s ( n ) = 0 $$ for every integer $ n \geq 0 $) is said to be \textbf{C-recursive} of \textbf{order} $ d $ (cf.\ Kauers \& Paule \cite[Section 4.2]{KauersPaule} and Petkovšek \& Zakrajšek \cite[Definition 1]{PetkovsekZakrajsek}).

{And, given} a non-empty subset $ S $ of a ring $ R $, we denote the set of polynomials (resp., rational functions, formal power series) with coefficients in $ S $ and variable $ z $ as $ S [ z ] $ (cf.\ $ S ( z ) $, $ S [[ z ]] $) (cf.\ Hungerford \cite[List of Symbols]{Hungerford}).

{\section{Arithmetic terms vs.\ hypergeometric closed forms} \label{SectionHypergeometric}}

Observe that the total number of operations occurring in an arithmetic term is \textit{fixed} (i.e.\ it does not depend on the variables). Expressions with this property are often called \textit{closed forms} (cf.\ Borwein \& Crandall \cite{BorweinCrandall}).

For example, $ \sum_{k = 0}^n ( 5^k ) $ and $ ( 5^{n + 1} - 1 ) / 4 $ are expressions that represent the same {sequence}, but only the latter is usually considered a closed form because it is the only one for which the total number of operations does not depend on $ n $.

In addition, consider the classical \textbf{Fibonacci sequence}, defined by {the recurrence} $ F ( 0 ) = 0 $, $ F ( 1 ) = 1 $ and $$ F ( n + 2 ) - F ( n + 1 ) - F ( n ) = 0 $$ for every integer $ n \geq 0 $ (see K\v{r}\'{i}\v{z}ek et al.\ \cite[Remark 10.12]{KrizekEtAl}). {It} is known that \begin{equation}\label{SumF} F(n) = \frac{1}{2^{n-1}} \sum _{k=0}^{\left\lfloor (n-1)/2 \right\rfloor} 5^k \binom{n}{2k+1} \end{equation} for every integer $ n \geq 0 $ (see \href{https://oeis.org/A000045}{\texttt{OEIS A000045}}). But the right-hand side of Identity \eqref{SumF} cannot be regarded as a closed form either, because it contains a sum of variable length.

{Now, consider} a field $ K $. The \textbf{consecutive term ratio} of an expression $ e ( n ) $ is the expression $ e ( n + 1 ) / e ( n ) $. A \textbf{hypergeometric term} with respect to $ K $ is a univariate expression whose consecutive term ratio is a rational function on $ K $ (i.e.\ an expression $ e ( n ) $ such that $ e ( n + 1 ) / e ( n ) \in K ( n ) $). And a \textbf{hypergeometric closed form} with respect to $ K $ is a linear combination of hypergeometric terms. See Petkovšek et al.\ \cite[Definition 8.1.1]{PetkovsekEtAl} and Sauras-Altuzarra \cite[Definition 1.4.13]{SaurasAltuzarra}.

For example, $ n ! + 2^n $ is a hypergeometric closed form (indeed, the consecutive term ratios of $ n ! $ and $ 2^n $ are $ n + 1 $ and $ 2 $, respectively), for which we also know an arithmetic-term representation (see Prunescu \& Sauras-Altuzarra \cite{PrunescuSaurasAltuzarra}). However, $ 2^{n^2} $ and $ n^n $ are very common arithmetic terms that are not hypergeometric closed forms.

The problem of how to calculate hypergeometric closed forms of C-recursive sequences (and, in fact, of \textit{holonomic sequences}) is solved thanks to \textit{Petkovšek's complete Hyper algorithm} (see Petkovšek et al.\ \cite[Section 8.9]{PetkovsekEtAl} and Sauras-Altuzarra \cite[Definition 1.4.11 and Section 1.8]{SaurasAltuzarra}).

{An advantage of working with hypergeometric closed forms is that the involved operations have nice cancellation properties, while working with arithmetic terms can be cumbersome because of the presence of the floor function. However, it is often possible to obtain alternative closed forms that avoid the floor function, for example $$ \left\lfloor \frac{n ( n + 1 )}{4} \right\rfloor = \frac{n ( n + 1 ) + \iota^{n ( n + 1 )} - 1}{4} $$ for every integer $ n \geq 0 $ (cf.\ \href{https://oeis.org/A039823}{\texttt{OEIS A039823}}).}

{On the other hand, the} main deficiency of the hypergeometric closed forms is that, from a strict point of view, they are just pseudo-closed forms, because, as they contain sub-expressions that we cannot compute with total precision, they simply encode symbolic operations of variable length.

For example, we have that \begin{equation}\label{HypergeometricClosedFormF} F(n) = \frac{(1+\sqrt{5})^n - (1-\sqrt{5})^n}{2^n \ \sqrt{5}} \end{equation} for every integer $ n \geq 0 $ (see \href{https://oeis.org/A000045}{\texttt{OEIS A000045}}). The right-hand side of Identity \eqref{HypergeometricClosedFormF} is clearly a hypergeometric closed form. And we can see that, as $ \sqrt{5} $ is irrational, one cannot compute exactly neither $(1+\sqrt{5})^n$ nor $(1-\sqrt{5})^n$ when $ n $ is positive. Therefore, $ ( 1 + \sqrt{5} )^n $ is just a notation for the symbolic sum of variable length $$ \sum_{k = 0}^n \binom{n}{k} ( \sqrt{5} )^k . $$

Another important {limitation of the} hypergeometric closed forms is that they essentially rely on holonomicity conditions, and these are rare. As Flajolet et al.\ put it \cite[Note 2]{FlajoletEtAl}, ``\textit{[...] a rough heuristic in this range of problem is the following: almost anything is non-holonomic until it is holonomic by design.}''. In contrast, {as mentioned in Section \ref{SectionPreliminary},} Mazzanti's theorem ensures that we can find arithmetic-term representations for most of the usual integer sequences.

We show, in Theorem \ref{ThmKalmar}, that a C-recursive sequence {$ s ( n ) $} of natural numbers that is defined by a recurrence formula with rational coefficients, is a Kalmar function. It follows, by applying Mazzanti's theorem, that the sequence {$ s ( n ) $} has a representation as a univariate arithmetic term.

The general method of computing arithmetic terms, designed by Mazzanti in order to prove his theorem, produces, typically, extremely long and intricate outputs {(see Prunescu \& Sauras-Altuzarra \cite{PrunescuSaurasAltuzarra2})}. However, for the particular case of C-recursive sequences of natural numbers, we present an ad-hoc idea that allows to generate much shorter arithmetic terms. {In} fact, our {technique (Method \ref{MainMethod})} permits to represent C-recursive integer sequences defined by recurrence laws with rational coefficients as the difference of two arithmetic terms. {It is worth mentioning that a similar technique was introduced by Prunescu \cite{Prunescu}.}

In addition, we provide numerous applications to Lucas sequences, to the sequence of solutions of Pell's equation and to C-recursive {sequences} of order three. {For} the Fibonacci sequence in particular, we find (in Corollary \ref{CorFormulaF}) the arithmetic-term representation \begin{equation}\label{ArithmeticTermF} F(n) = \left \lfloor \frac{3^{n^2 + n}}{3^{2n} \dotdiv ( 3^n + 1 )} \right \rfloor \bmod 3^n , \end{equation} which holds for every integer $ n \geq 0 $. Notice that, unlike in the right-hand side of Identity \eqref{HypergeometricClosedFormF}, in the right-hand side of Identity \eqref{ArithmeticTermF} every sub-term evaluates to an integer, and thus the total number of operations is really fixed.

\section{Term extraction} \label{SectionExtraction}

The \textbf{generating function} in variable $ z $ of a sequence {$ s ( n ) $ of complex numbers}, which is denoted by $ \GF_s ( z ) $, is the formal power series $$ \sum_{i = 0}^\infty s ( i ) z^i $$ (cf.\ Weisstein \cite{Weisstein2}).

Theorem \ref{ThmExtraction1} shows how, under certain conditions, one can extract a term of an integer sequence by means of its generating function.

\begin{theorem}\label{ThmExtraction1} If {$ t ( n ) $ is a sequence of natural numbers}, $ R $ is the radius of convergence of {$ \GF_t ( z ) $} at zero and $ b $, $ m $ and $ n $ are three integers such that $ b \geq 2 $, $ n \geq m \geq 2 $, $ b^{- m} < R $ and $ t ( r ) < b^{r - 2} $ for every integer $ r \geq m $, then \begin{equation}\label{IdentityExtraction} t(n) = \left \lfloor b^{n^2} \GF_t ( b^{- n} ) \right \rfloor \bmod b^n . \end{equation} \end{theorem}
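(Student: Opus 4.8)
The plan is to evaluate the generating function explicitly at the point $z = b^{-n}$ and split the resulting series according to the sign of the exponent of $b$. First I would observe that the hypotheses $n \geq m$ and $b^{-m} < R$ give $b^{-n} \leq b^{-m} < R$, so $b^{-n}$ lies strictly inside the disk of convergence and the evaluation $\GF_t(b^{-n}) = \sum_{i=0}^\infty t(i)\, b^{-ni}$ is legitimate. Multiplying through by $b^{n^2}$ then yields
$$ b^{n^2}\, \GF_t(b^{-n}) = \sum_{i=0}^\infty t(i)\, b^{n(n-i)} , $$
and the idea is to break this sum into the three ranges $i < n$, $i = n$ and $i > n$.

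Writing $A = \sum_{i=0}^{n-1} t(i)\, b^{n(n-i)}$ for the head, the middle term ($i = n$) is simply $t(n)$, and I set $\varepsilon = \sum_{i=n+1}^\infty t(i)\, b^{n(n-i)}$ for the tail. Every exponent occurring in $A$ satisfies $n(n-i) \geq n$ (since $i \leq n-1$), so $A$ is a nonnegative integer divisible by $b^n$. The substance of the argument is then to prove $0 \leq \varepsilon < 1$, which forces $\left\lfloor b^{n^2}\, \GF_t(b^{-n}) \right\rfloor = A + t(n)$.

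To bound the tail, I would substitute $i = n + j$ and invoke the growth hypothesis: since every index $i > n \geq m$ satisfies $i \geq m$, the bound $t(i) < b^{i-2}$ applies, and, both sides being integers, this sharpens to $t(n+j) \leq b^{n+j-2} - 1$. Summing the geometric majorant gives
$$ \varepsilon = \sum_{j=1}^\infty t(n+j)\, b^{-nj} \leq \sum_{j=1}^\infty \bigl( b^{n+j-2} - 1 \bigr)\, b^{-nj} = \frac{b^{-1}}{1 - b^{1-n}} - \frac{b^{-n}}{1 - b^{-n}} , $$
where convergence of the geometric series is guaranteed by $n \geq 2$, which makes $b^{1-n} \leq b^{-1} < 1$. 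A short computation shows $b^{-1} \leq 1 - b^{1-n}$ — equivalently $b^{2-n} \leq b - 1$, which holds because $b^{2-n} \leq 1 \leq b - 1$ — so the first fraction is at most $1$; subtracting the strictly positive second fraction yields $\varepsilon < 1$, even in the extremal case $b = m = n = 2$ where the first fraction equals exactly $1$.

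Finally, having established $\left\lfloor b^{n^2}\, \GF_t(b^{-n}) \right\rfloor = A + t(n)$, I would reduce modulo $b^n$. Because $A$ is a multiple of $b^n$ and $0 \leq t(n) < b^{n-2} < b^n$ (again by the growth bound, now with $r = n$), the residue is exactly $t(n)$, which is Identity \eqref{IdentityExtraction}. I expect the only delicate point to be the tail estimate, and in particular securing the \emph{strict} inequality $\varepsilon < 1$ in the boundary case $b = n = 2$; passing from the real inequality $t(i) < b^{i-2}$ to the integer inequality $t(i) \leq b^{i-2} - 1$ is precisely what rescues strictness there.
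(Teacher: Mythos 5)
Your proposal is correct and follows essentially the same route as the paper: evaluate $\GF_t$ at $b^{-n}$, scale by $b^{n^2}$, split off the head (a non-negative multiple of $b^n$) and the middle term $t(n)$, bound the tail by a geometric series via $t(r) < b^{r-2}$, and reduce modulo $b^n$. The only difference is in how strictness of the tail bound is secured at the boundary case $b=2$: you invoke integrality ($t(n+j) \leq b^{n+j-2}-1$) to get an explicit margin, whereas the paper observes that the term-by-term strict inequalities $t(n+i)/b^{ni} < b^{-i}$ already make the summed bound $\sum_{i\geq 1} b^{-i} = 1/(b-1) \leq 1$ strict.
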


\begin{proof}
 We have that $ b \geq 2 $ and $ n \geq m \geq 2 $, so $ 0 < b^{- n} \leq b^{- m} < R $ and hence we can evaluate {$ \GF_t ( z ) $} at $ b^{-n} $.

By doing so, we obtain $$\GF_t(b^{-n}) = t(0) + \frac{t(1)}{b^n} + \frac{t(2)}{b^{2n}} + \ldots + \frac{t(n)}{b^{n^2}} + \frac{t(n + 1)}{b^{n^2 + n}} + \ldots $$ and thus, for some {integer $ k \geq 0 $}, $$ b^{n^2} \GF_t(b^{-n}) = k b^{n} + t(n) + \frac{t(n + 1)}{b^{n}} + \frac{t(n + 2)}{b^{2 n}} + \frac{t(n + 3)}{b^{3 n}} + \ldots . $$

Let $ v = b^{n^2} \GF_t(b^{-n}) - k b^{n} - t(n) $.

Given {an integer $ i \geq 1 $}, the inequality $$ ni - n - i + 2 \geq i $$ is equivalent with $$ (n - 2)(i - 1) \geq 0 , $$ which holds true because $ n \geq m \geq 2 $.

Therefore every {integer $ i \geq 1 $} satisfies that $$ \frac{t(n + i)}{b^{ni}}< \frac{b^{(n + i) - 2}}{b^{ni}} = \frac{1}{b^{ni- n - i + 2}} \leq \frac{1}{b^i} . $$

It follows that $$ 0 \leq v < \frac{1}{b} + \frac{1}{b^2} + \frac{1}{b^3} + \ldots = \frac{1}{b - 1} \leq 1 $$ and, consequently, $$ \left \lfloor b^{n^2} \GF_t(b^{-n}) \right \rfloor = k b^n + t(n) . $$

Finally, by applying that $ 0 \leq t(n) < b^{n-2} < b^n $, we get $$ t(n) = \left \lfloor b^{n^2} \GF_t(b^{-n}) \right \rfloor \bmod b^n . $$  
\end{proof}

Theorem \ref{ThmExtraction2} shows that, under some stronger conditions than those of Theorem \ref{ThmExtraction1}, Identity \eqref{IdentityExtraction} already holds for every {integer $ n \geq 1 $}.

\begin{theorem}\label{ThmExtraction2} If {$ t ( n ) $ is a sequence of natural numbers}, $ R $ is the radius of convergence of {$ \GF_t ( z ) $} at zero and $ b $ and $ n $ are positive integers such that $ b \geq 8 $, $ b^{- 1} < R $ and $ t ( r ) < b^{r / 3} $ for every {integer $ r \geq 1 $}, then $$ t(n) = \left \lfloor b^{n^2} \GF_t ( b^{- n} ) \right \rfloor \bmod b^n . $$ \end{theorem}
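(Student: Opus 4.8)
The plan is to follow the same skeleton as the proof of Theorem \ref{ThmExtraction1}, isolating the integer part of $ b^{n^2} \GF_t ( b^{-n} ) $ and bounding the fractional tail, but now exploiting the stronger hypotheses $ b \geq 8 $ and $ t ( r ) < b^{r / 3} $ to make the argument valid for \emph{every} $ n \geq 1 $ rather than only from some threshold $ m $ onward. First I would check that the evaluation is legitimate: since $ n \geq 1 $ and $ b \geq 8 $, we have $ b^{-n} \leq b^{-1} < R $, so $ \GF_t ( b^{-n} ) $ converges and can be evaluated.

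Expanding as in Theorem \ref{ThmExtraction1}, the initial segment $ t ( 0 ) + t ( 1 ) b^{-n} + \ldots + t ( n - 1 ) b^{-( n - 1 ) n} $ scales by $ b^{n^2} $ to an integer multiple $ k b^n $ of $ b^n $ (each summand becomes $ t ( j ) b^{n ( n - j )} $ with $ n - j \geq 1 $), the middle term contributes exactly $ t ( n ) $, and the remainder is the tail $ v = \sum_{i = 1}^\infty t ( n + i ) / b^{n i} $. It then suffices to prove that $ 0 \leq v < 1 $ and that $ t ( n ) < b^n $.

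The heart of the argument is the tail estimate. Using the growth bound, each summand obeys $ t ( n + i ) / b^{n i} < b^{( n + i ) / 3 - n i} $, and since the exponent equals $ i ( 1 / 3 - n ) + n / 3 $ with $ 1 / 3 - n < 0 $ for $ n \geq 1 $, the series is dominated by a geometric one of ratio $ q = b^{1 / 3 - n} \leq b^{-2 / 3} < 1 $. Summing gives $ v < b^{( 1 - 2 n ) / 3} / ( 1 - b^{1 / 3 - n} ) $. I would then observe that this bound is decreasing in $ n $ (numerator decreasing, denominator increasing toward $ 1 $), so it is maximized at $ n = 1 $, where $ b \geq 8 $ yields $ b^{-1 / 3} \leq 1 / 2 $ and $ 1 - b^{-2 / 3} \geq 3 / 4 $, hence $ v \leq ( 1 / 2 ) / ( 3 / 4 ) = 2 / 3 < 1 $.

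With $ 0 \leq v < 1 $ established, it follows that $ \lfloor b^{n^2} \GF_t ( b^{-n} ) \rfloor = k b^n + t ( n ) $. Finally, $ t ( n ) < b^{n / 3} \leq b^n $ (strict, since $ n / 3 < n $ and $ b > 1 $) gives $ t ( n ) \bmod b^n = t ( n ) $, so reducing modulo $ b^n $ recovers the claimed identity. The main obstacle is confirming the uniform bound $ v < 1 $ for all $ n \geq 1 $ simultaneously; this is precisely where the specific constants $ b \geq 8 $ and the exponent $ r / 3 $ are needed, and carrying out the monotonicity-in-$ n $ reduction to the worst case $ n = 1 $ correctly is the crux.
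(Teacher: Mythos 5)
Your proposal is correct and follows essentially the same route as the paper: the same decomposition $ b^{n^2} \GF_t ( b^{-n} ) = k b^n + t ( n ) + v $ followed by a geometric-series bound showing $ v < 1 $, then reduction modulo $ b^n $ using $ t ( n ) < b^{n / 3} < b^n $. The only (immaterial) difference is in the tail estimate: the paper bounds each term by $ ( \sqrt[3]{b} )^{-i} $ uniformly in $ n $ via the inequality $ n i - n / 3 - i / 3 \geq i / 3 $ and sums to $ 1 / ( \sqrt[3]{b} - 1 ) \leq 1 $, whereas you keep the $ n $-dependence in the dominating geometric series and reduce to the worst case $ n = 1 $; both yield the required $ v < 1 $.
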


\begin{proof} We have that $ 0 < b^{-n} \leq b^{-1} < R $, so we can evaluate {$ \GF_t ( z ) $} at $ b^{-n} $ and, in an analogous manner than that in the proof of Theorem \ref{ThmExtraction1}, we obtain that, for some {integer $ k \geq 0 $}, $$ b^{n^2} \GF_t(b^{-n}) = k b^{n} + t(n) + \frac{t(n + 1)}{b^{n}} + \frac{t(n + 2)}{b^{2 n}} + \frac{t(n + 3)}{b^{3 n}} + \ldots . $$

Let $ v = b^{n^2} \GF_t(b^{-n}) - k b^{n} - t(n) $.

Given {an integer $ i \geq 1 $}, the inequality $$ ni - \frac{n}{3} - \frac{i}{3} \geq \frac{i}{3} $$ is equivalent with $$ n \geq \frac{2}{3 - 1 / i} , $$ which holds true because $ n $ is positive.

Therefore every {integer $ i \geq 1 $} satisfies that $$ \frac{t(n + i)}{b^{ni}}< \frac{b^{( n + i ) / 3}}{b^{ni}} = \frac{1}{b^{ni - n / 3 - i / 3}} \leq \frac{1}{b^{i / 3}} = \frac{1}{(\sqrt[3]{b})^i} . $$

By applying that $ b \geq 8 $, it follows that $$ 0 \leq v < \frac{1}{\sqrt[3]{b}} + \frac{1}{(\sqrt[3]{b})^2} + \frac{1}{(\sqrt[3]{b})^3} + \ldots = \frac{1}{\sqrt[3]{b} - 1} \leq 1 $$ and, consequently, $$ \left \lfloor b^{n^2} \GF_t(b^{-n}) \right \rfloor = k b^n + t(n) . $$

Finally, by applying that $ 0 \leq t(n) < b^{n/3} < b^n $, we get $$ t(n) = \left \lfloor b^{n^2} \GF_t(b^{-n}) \right \rfloor \bmod b^n . $$ \end{proof}

Example \ref{ExFibonacci} shows that, for well-chosen constants $ b $, the expression $$ \lfloor b^{n^2} ( t ( 0 ) + t ( 1 ) b^{- n} +  t ( 2 ) b^{- 2 n} + \ldots + t ( n ) b^{- n^2} + \ldots ) \rfloor $$ from Identity \eqref{IdentityExtraction} encodes information about the whole tuple $ ( t ( k ) : 0 \leq k \leq n ) $.

\begin{example}\label{ExFibonacci} Consider the Fibonacci sequence {$ F ( n ) $}, $n = 10$ and $b = 10$. As we will see later, $ \GF_F(z) = z / (1 - z - z^2) $, so the number $ \left \lfloor b^{n^2} \GF_t ( b^{- n} ) \right \rfloor $, which can be written as $$ \left \lfloor \frac{b^{n^2 + n}}{b^{2 n} - b^n - 1} \right \rfloor , $$ becomes $$ \textbf{1}000000000\textbf{1}000000000\textbf{2}000000000\textbf{3}000000000\textbf{5}000000000\textbf{8} \ldots $$ $$ \ldots 00000000\textbf{13}00000000\textbf{21}00000000\textbf{34}00000000\textbf{55} . $$ \end{example}

\section{C-recursive sequences} \label{SectionCRecursive}

Proposition \ref{PropCharacterization} is inspired by Wilf's method for solving recurrences (see Wilf \cite[Section 1.2]{Wilf}). It is also the restriction of the easy part of the characterization of the C-recursive sequences over a field $ K $ to the case in which $ K = \mathbb{Q} $.

\begin{proposition}\label{PropCharacterization} (Stanley \cite[Theorem 4.1.1]{Stanley}, Petkovšek \& Zakrajšek \cite[Theorem 1]{PetkovsekZakrajsek}) If {$ s ( n ) $ is a sequence of rational numbers}, $ d $ is a positive integer, $ \alpha_1 $, $ \ldots $, {$ \alpha_d $ are rational numbers}, $ \alpha_d $ is non-zero and $ B ( z ) = 1 + \alpha_1 z + \ldots + \alpha_d z^d \in \mathbb{Q}[z]$, then the following statements are equivalent.
\begin{enumerate}
    \item If $ n $ is a non-negative integer, then $$ s ( n + d ) + \alpha_1 s ( n + d - 1 ) + \ldots + \alpha_d s ( n ) = 0 . $$
    \item There is a polynomial $ A ( z ) \in \mathbb{Q} [ z ] $ such that $ \deg ( A ) < \deg ( B ) $ and $$ \GF_s ( z ) = A ( z ) / B ( z ) \in \mathbb{Q} ( z ) . $$
\end{enumerate}
\end{proposition}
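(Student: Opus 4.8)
The plan is to work entirely in the ring $\mathbb{Q}[[z]]$ of formal power series and to compute the product $B(z)\,\GF_s(z)$ coefficientwise. Writing $\alpha_0 = 1$ so that $B(z) = \sum_{j=0}^d \alpha_j z^j$, the Cauchy product gives $B(z)\,\GF_s(z) = \sum_{n=0}^\infty c_n z^n$, where each coefficient $c_n = \sum_{j=0}^{\min(n,d)} \alpha_j\, s(n-j)$ is a finite sum (so the product is well-defined as a formal power series, with no convergence concern). First I would observe that for every index $n \geq d$ the truncation disappears and $c_n = s(n) + \alpha_1 s(n-1) + \ldots + \alpha_d s(n-d)$.

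The key observation is then that, after the substitution $n = m + d$, the quantity $c_{m+d}$ is \emph{exactly} the left-hand side of the recurrence in statement 1 evaluated at the non-negative integer $m$. Consequently, statement 1 is equivalent to the assertion that $c_n = 0$ for every integer $n \geq d$, which is in turn equivalent to $B(z)\,\GF_s(z)$ being a polynomial, say $A(z) = \sum_{n=0}^{d-1} c_n z^n \in \mathbb{Q}[z]$, of degree at most $d - 1$, hence strictly less than $d = \deg(B)$.

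To pass between this polynomial identity and statement 2, I would use that $B(0) = 1 \neq 0$, which makes $B(z)$ a unit in $\mathbb{Q}[[z]]$. Thus, assuming statement 1, from $B(z)\,\GF_s(z) = A(z)$ I can divide by $B(z)$ to obtain $\GF_s(z) = A(z)/B(z) \in \mathbb{Q}(z)$ with $\deg(A) < \deg(B)$, which is statement 2. Conversely, assuming statement 2, multiplying the identity $\GF_s(z) = A(z)/B(z)$ through by $B(z)$ yields $B(z)\,\GF_s(z) = A(z)$, a polynomial of degree below $d$, so that $c_n = 0$ for all $n \geq d$ and the recurrence of statement 1 follows by reading off these coefficients.

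Since the argument is essentially a single reversible computation, there is no deep obstacle; the points requiring the most care are purely administrative. The main one is the index bookkeeping that aligns the coefficient $c_{m+d}$ with the correct instance of the recurrence (including the convention $\alpha_0 = 1$ and the fact that the recurrence is to hold for \emph{every} $m \geq 0$, matching \emph{every} $n \geq d$). The second is justifying the division step by invoking the invertibility of $B(z)$ in $\mathbb{Q}[[z]]$, which hinges on $B(0) = 1$; and finally one should note that the degree bound $\deg(A) < \deg(B)$ is automatic because $A$ collects only the coefficients $c_0, \ldots, c_{d-1}$.
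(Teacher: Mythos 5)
Your proposal is correct and follows essentially the same route as the paper: both define the product $B(z)\,\GF_s(z)$ in $\mathbb{Q}[[z]]$, identify its coefficients of index $n \geq d$ with the left-hand side of the recurrence, and deduce that statement 1 is equivalent to this product being a polynomial of degree below $d = \deg(B)$. You merely spell out the details that the paper leaves as ``easy to see'' (the invertibility of $B(z)$ in $\mathbb{Q}[[z]]$ via $B(0)=1$ and the automatic degree bound), which is a faithful elaboration rather than a different argument.
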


The proof of Proposition \ref{PropCharacterization} is a helpful method to find the generating function of a C-recursive sequence. We sketch it here for the convenience of the reader.

\begin{proof} Let $ A(z) = \sum _{n=0} ^\infty a(n)z^n = \GF_s(z) B(z) $.

Observe that $ A(z) \in \mathbb{Q} [[z]] $, since $\GF_s(z) \in \mathbb{Q}[[z]] $ and $ B(z) \in \mathbb{Q}[z] $.

In addition, $ a(n) = s(n) + \alpha_1 s(n-1) + \ldots + \alpha_d s(n-d) $ for every integer $ n \geq d $.

It is then easy to see that the two statements are equivalent with the condition that $ a(n) = 0 $ for every integer $ n \geq d $. \end{proof}

For Lemma \ref{LemmaGeneralInequality}, which could be considered folklore, we could not find a reference. But, once again, there are reasons to show its proof: it is constructive, and it may help the reader to find the constant $ c $ for a given C-recursive sequence {$ s ( n ) $}.

\begin{lemma}\label{LemmaGeneralInequality} If {$ s ( n ) $ is a C-recursive sequence of complex numbers}, then there is an integer $ c \geq 1 $ such that $ | s(n) | < c^{n+1} $ for every integer $ n \geq 0 $. \end{lemma}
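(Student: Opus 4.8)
The plan is to bound $|s(n)|$ by exploiting the rational generating function guaranteed by Proposition~\ref{PropCharacterization}. Since $s(n)$ is C-recursive, we have $\GF_s(z) = A(z)/B(z)$ with $A, B \in \mathbb{Q}[z]$, $\deg(A) < \deg(B) = d$ and $B(0) = 1$. The key fact is that $\GF_s(z)$ is a rational function whose denominator $B(z)$ has a non-zero constant term, so it is analytic in a disc around the origin of positive radius $R$; this radius equals the modulus of the smallest root of $B(z)$. Consequently the coefficients $s(n)$ grow at most geometrically, which is exactly the content of the lemma.

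First I would pass to a \emph{constructive} route rather than invoking abstract coefficient-growth estimates, so as to produce an explicit $c$. I would factor $B(z)$ over $\mathbb{C}$ and perform a partial-fraction decomposition of $A(z)/B(z)$, writing it as a finite sum of terms of the form $\beta_{j,\ell}/(1 - \lambda_j z)^\ell$, where the $\lambda_j$ are the reciprocals of the roots of $B$. Expanding each such term as a power series gives a coefficient of the shape $\binom{n + \ell - 1}{\ell - 1} \beta_{j,\ell} \lambda_j^{\,n}$, so that $s(n)$ is a finite linear combination of terms $p_j(n)\,\lambda_j^{\,n}$ with $p_j$ a polynomial whose degree is less than the multiplicity of the corresponding root. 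Setting $\Lambda = \max_j |\lambda_j|$, each term is bounded by $C \cdot n^{d}\,\Lambda^n$ for a suitable constant $C$ depending only on the $\beta_{j,\ell}$ and on $d$.

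Next I would absorb the polynomial factor into the exponential. Fix any integer $c \geq 1$ with $c > \Lambda$ and $c$ large enough that $C\,n^{d} \le (c/\Lambda)^{n}$ for all $n \ge 0$; such a $c$ exists because $(c/\Lambda)^n$ eventually dominates any fixed polynomial once $c/\Lambda > 1$, and by enlarging $c$ further one covers the finitely many small $n$ as well. With this choice,
\[
|s(n)| \le C\,n^{d}\,\Lambda^{n} \le (c/\Lambda)^{n}\,\Lambda^{n} = c^{n} < c^{n+1},
\]
which is the desired bound for every integer $n \ge 0$.

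The main obstacle I anticipate is handling repeated roots of $B$ cleanly: simple roots alone would give $|s(n)|$ bounded by a pure geometric term, but multiple roots introduce the polynomial prefactors $p_j(n)$, and one must be careful that the degree of each $p_j$ is controlled by $d$ and that the step ``polynomial times $\Lambda^n$ is eventually below $c^n$'' is made uniform over all $n$, including the initial segment. A secondary technical point is the degenerate case $\Lambda = 0$, i.e.\ when $B(z)$ is a nonzero constant multiple of a power of $z$ after normalization—but since $B(0) = 1$ forces $B$ to have nonzero constant term, every root is nonzero and finite, so $\Lambda$ is a genuine positive real and no such degeneracy arises. I would phrase the final selection of $c$ so that the single constant works simultaneously for all $n$, emphasizing the constructive nature promised in the lemma's preamble.
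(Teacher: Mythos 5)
Your proof is essentially correct, but it takes a genuinely different route from the paper's. The paper never touches the generating function here: it picks an integer $c$ satisfying $d\left(|\alpha_1|+\ldots+|\alpha_d|\right)<c$ and $|s(k)|<c^{k+1}$ for $k=0,\ldots,d-1$, and then runs a complete induction directly on the recurrence, bounding $|s(n)|\leq(|\alpha_1|+\ldots+|\alpha_d|)(|s(n-1)|+\ldots+|s(n-d)|)<(c/d)\cdot d\,c^{n}=c^{n+1}$. That argument is purely elementary and fully effective: $c$ is read off from the coefficients and the initial values, which is exactly the point the authors emphasize (the proof is meant to help the reader find $c$ in concrete examples, and it is re-used verbatim in Theorem \ref{ThmKalmar} and Method \ref{MainMethod}). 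Your partial-fraction route requires factoring $B(z)$ over $\mathbb{C}$, so the resulting $c$ is only as explicit as the roots are; in exchange it identifies the true exponential growth rate $\Lambda$ and so can yield a much sharper base than the paper's $c$. Two small repairs are needed in your write-up. First, the bound $|s(n)|\leq C\,n^{d}\Lambda^{n}$ is false at $n=0$ whenever $s(0)\neq 0$ (it asserts $|s(0)|\leq 0$); replace $n^{d}$ by $(n+1)^{d}$, or treat $n=0$ separately by also requiring $c>|s(0)|$ — and take $c\geq 2$ so that $c^{n}<c^{n+1}$ is strict. Second, Proposition \ref{PropCharacterization} as stated in the paper concerns sequences and coefficients in $\mathbb{Q}$, whereas the lemma allows complex coefficients, so $A,B$ live in $\mathbb{C}[z]$ rather than $\mathbb{Q}[z]$; you need the standard field-$K$ version of that characterization (which the paper alludes to but does not state), not the $\mathbb{Q}$-instance it actually proves. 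Neither issue affects the substance of your argument.
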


\begin{proof} The hypothesis that {$ s ( n ) $} is C-recursive yields the existence of {an integer $ d \geq 1 $} and $ d $ complex numbers $\alpha_1$, $\ldots$, $\alpha_d $ such that $ s(n+d) =  - \alpha_1 s ( n + d - 1 ) - \ldots - \alpha_d s ( n ) $ for every integer $ n \geq 0 $.

Now, choose a sufficiently large integer $ c \geq 1 $ such that the following inequalities hold: $ d \left (|\alpha_1| + \ldots + |\alpha_d| \right ) < c $, $|s(0)| < c$, $|s(1)| < c^2$, $\ldots$, $|s(d-1)| < c^d$.

We then show by complete induction that $|s(n)| < c^{n+1}$ for every integer $ n \geq 0 $.

Indeed, given an integer $ n \geq d $, suppose that the inequality $|s(k)| < c^{k+1}$ is true for every {$ k \in \{ 0 , \ldots , n - 1 \} $}.

Then $$|s(n)| \leq |\alpha_1| |s(n-1)| + \ldots +|\alpha_d| |s(n-d)| \leq $$ $$ \leq (|\alpha_1| + \ldots + |\alpha_d|) (|s(n-1)| + \ldots + |s(n-d)|) < $$ $$ < ( c / d ) (c^n + \ldots + c^{n-d+1}) \geq ( c / d ) d c^n = c^{n+1} . $$ \end{proof}

\begin{theorem}\label{ThmKalmar} Every C-recursive sequence of natural numbers is a Kalmar function. \end{theorem}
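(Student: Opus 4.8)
The plan is to exhibit an explicit algorithm that computes $s(n)$ by unrolling the recurrence, and then to show that its running time is polynomial in $n$ (which is, of course, far below any tower of exponentials). Since the sequence is C-recursive, there exist an integer $d \geq 1$ and rational numbers $\alpha_1, \ldots, \alpha_d$ with $\alpha_d$ non-zero such that $s(n+d) = - \alpha_1 s(n+d-1) - \ldots - \alpha_d s(n)$ for every integer $n \geq 0$. The first step is to clear denominators: let $q$ be a positive common denominator of $\alpha_1, \ldots, \alpha_d$, and write $\alpha_i = p_i / q$ with $p_i \in \mathbb{Z}$. The recurrence then reads $q \cdot s(n+d) = - p_1 s(n+d-1) - \ldots - p_d s(n)$, so that $$ s(n+d) = \frac{- p_1 s(n+d-1) - \ldots - p_d s(n)}{q} . $$ The crucial observation is that this division is \emph{exact}: since $s$ is by hypothesis a sequence of natural numbers, the right-hand side is always a natural number, and hence the whole computation can be carried out with integer arithmetic only. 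This is precisely what sidesteps the irrationality difficulties discussed in Section \ref{SectionHypergeometric}.

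Next I would describe the iterative procedure. The algorithm stores the fixed initial data $s(0), \ldots, s(d-1)$ and the constants $p_1, \ldots, p_d, q$, and then, for $k = d, d+1, \ldots, n$, computes $s(k)$ from the $d$ previously stored values via the boxed integer recurrence, discarding $s(k-d)$ at each stage. Computing $s(n)$ therefore requires $n - d + 1$ iterations, each consisting of $d$ multiplications, $d - 1$ additions, and a single exact division by the fixed constant $q$.

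To bound the running time, I would invoke Lemma \ref{LemmaGeneralInequality}, which supplies an integer $c \geq 1$ with $s(k) = |s(k)| < c^{k+1}$ for every integer $k \geq 0$. Consequently every value $s(k)$ handled by the algorithm has binary length at most $(k+1) \log_2 c + 1 = O(n)$, and the numerator appearing in a single iteration, being a sum of $d$ products of the fixed integers $p_i$ with these values, also has $O(n)$ bits. Thus each of the $O(n)$ iterations operates on integers of $O(n)$ bits and costs $\mathrm{poly}(n)$ time, so the total deterministic computation time is $\mathrm{poly}(n)$. Since a polynomial is dominated by $2^{2^{\iddots^{2^n}}}$, the sequence $s(n)$ is a Kalmar function.

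The main obstacle is not the time analysis, which is routine once the size bound of Lemma \ref{LemmaGeneralInequality} is in hand, but rather the justification of exact computability: one must argue that clearing denominators yields a recurrence in which the division by $q$ never loses information. This follows from the standing hypothesis that $s$ takes values in $\mathbb{N}$, which forces the rational right-hand side to be integral at every step; keeping the entire computation inside $\mathbb{Z}$ is exactly what makes the algorithm well-defined and its cost controllable.
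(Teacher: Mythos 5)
Your proposal is correct and follows essentially the same route as the paper: unroll the recurrence iteratively, invoke Lemma \ref{LemmaGeneralInequality} to bound every intermediate value by $c^{n+1}$ and hence its bit-length by $O(n)$, and conclude that the total deterministic running time is polynomial in $n$, far below the Kalmar bound. The one point where you go beyond the paper is the explicit clearing of denominators and the observation that the division by $q$ is exact because $s$ takes values in $\mathbb{N}$; the paper's proof simply assumes integer coefficients from the outset, so your treatment of the genuinely rational case is a welcome completion rather than a divergence.
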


\begin{proof} Let $ d $ be a positive integer, let $ \alpha_1 $, $ \ldots $, $ \alpha_d $ be integers such that $ \alpha_d $ is non-zero, and let {$ s ( n ) $} be the sequence such that $ s ( n + d ) + \alpha_1 s ( n + d - 1 ) + \ldots + \alpha_d s ( n ) = 0 $ for every integer $ n \geq 0 $.

Let $ c $ be a positive integer such that $s(n) < c^{n+1}$ for every integer {$ n \geq 0 $}, which exists because of Lemma \ref{LemmaGeneralInequality}. In fact, recall, from the proof of Lemma \ref{LemmaGeneralInequality}, that $ c $ can be chosen so that the following inequalities hold: $ d \left (|\alpha_1| + \ldots + |\alpha_d| \right ) < c $, $|s(0)| < c$, $|s(1)| < c^2$, $\ldots$, $|s(d-1)| < c^d$.

{Now, fix an integer $ n \geq d $. In order to compute $ s ( n ) $ from its recurrence formula, one has to compute all the previous terms. Concretely}, for any of such terms, one performs $ d $ many multiplications and $ d - 1 $ many additions: indeed, $ s ( k ) = - \alpha_1 s ( k - 1 ) - \ldots - \alpha_d s ( k - d ) $ for every element $ k $ of $ \{ d , \ldots , n \} $.

Therefore, one performs less than $ 2 d n $ many operations in total. And every number $ a $ involved in these operations is such that $ | a | < c^{n + 1} $.

Given any positive integer $ r $, let $ || r || $ denote the length of the bitstring that encodes $ r $.

It is well-known that $ || r || = \left \lfloor \log ( r ) / \log ( 2 ) \right \rfloor + 1 $ for every integer $ r > 0 $, so $$ || c^{n + 1} || = \left \lfloor \dfrac{\log ( c^{n + 1} )}{\log ( 2 )} \right \rfloor + 1 \leq \dfrac{\log ( c^{n + 1} )}{\log ( 2 )} + 1 = \dfrac{\log ( c )}{\log ( 2 )} ( n + 1 ) + 1 < C n $$ for some integer $ C > 0 $. Hence every number $ a $ involved in the aforementioned operations is such that $ || a || \leq C n $.

It is also well-known that, if $ m $, $ u $ and $ v $ are integers such that $ v \neq 0 $ and $ m = \max ( || u || , || v || ) $, then the computation times of $ u + v $ and $ u v $ are upper-bounded by $ m $ and $ m^2 $, respectively. Thus, every aforementioned operation has a computation time of at most $ C^2 n^2 $ and, consequently, the total computation time is then of at most $ 2 d C^2 n^3 $.

Because $ n < 2^{|| n ||} $, we conclude that the total computation time is upper-bounded by $ d C^2 2^{3 || n || + 1} $. \end{proof}

It is then straightforward, from Theorem \ref{ThmKalmar} and Mazzanti's theorem, that every C-recursive sequence of natural numbers is representable by an arithmetic term.

A property in which $ n $ is the only variable, and which fails for a (possibly empty) finite set of values of $ n $ only, is said to hold \textbf{eventually} (cf.\ Weisstein \cite{Weisstein}) or \textbf{almost everywhere} (cf.\ Petkovšek et al.\ \cite[Section 8.2]{PetkovsekEtAl}).

Theorem \ref{ThmMethod} describes a technique to eventually represent a C-recursive integer sequence {$ s ( n ) $} as the difference of two arithmetic terms, and Theorem \ref{ThmExistence} shows that it is applicable as long as {$ s ( n ) $} has some non-zero term and the coefficients of its recurrence formula are rational.

The principle of these representations is the following. For sequences of non-negative integers, the representing arithmetic term is easily deduced from Theorem \ref{ThmExtraction1} or Theorem \ref{ThmExtraction2}. But this cannot be applied for general integer sequences, because of two reasons: first, the proofs of the aforementioned theorems do not work if the sequence contains negative terms; and second, an arithmetic term can take only non-negative values. The solution is to represent the given integer sequence as an algebraic (not truncated) subtraction of two arithmetic terms. 

In Theorem \ref{ThmMethod} we apply a well-known procedure: if $P \in \mathbb Z[x]$ is some polynomial, then $P$ can be written as $ P(x) = P_+(x) - P_-(x) $, where both $P_+(x), P_-(x) \in \mathbb N[x]$. More exactly, in $P_+(x)$ one collects the non-negative coefficients of $P(x)$, while $P_-(x)$ consists of the negative coefficients of $P(x)$, taken with opposite sign.

\begin{theorem}\label{ThmMethod} Consider {a sequence of natural numbers $ t ( n ) $, an integer sequence $ s ( n ) $}, four integers $ b_1 \geq 2 $, $b_2 \geq 8$, $ c \geq 0 $ and $ m \geq 2 $ and four polynomials $ A_+ ( z ) $, $ A_- ( z ) $, $ B_+ ( z ) $ and $ B_- ( z ) $ in $ \mathbb{N} [ z ] $. In addition, let $ R $ be the radius of convergence of {$ \GF_t ( z ) $} at zero, let $ h = \deg ( B_+ - B_- ) $, let $ E ( n, b ) $ be the expression \begin{equation}\label{BigArithmeticTerm} \left \lfloor \frac{b^{n^2 + h n} A_+(b^{- n}) \ \dotdiv \ b^{n^2 + h n} A_-(b^{- n})}{b^{h n} B_+(b^{- n}) \ \dotdiv \ b^{h n} B_-(b^{- n})} \right \rfloor \bmod b^n \end{equation} and suppose that:
\begin{enumerate}
    \item some term of {$ t ( n ) $} is positive,
    \item the equality $ t ( r ) = s ( r ) + c^{r + 1} $ holds for every integer $ r \geq 0 $,
    \item the equality $ \GF_t ( z ) = ( A_+ ( z ) - A_- ( z ) ) / ( B_+ ( z ) - B_- ( z ) ) $ holds,
    \item the fraction $ ( A_+ ( z ) - A_- ( z ) ) / ( B_+ ( z ) - B_- ( z ) ) $ is irreducible,
    \item the inequality $ B_- ( 0 ) < B_+ ( 0 ) $ holds,
    \item\label{Cond6} the inequality $ b_1^{- m} < R $ holds,
    \item\label{Cond7} the inequality $ t ( r ) < b_1^{r - 2} $ holds for every integer $ r \geq m $.
    \end{enumerate}
Then $ s ( r ) = E ( r , b_1 ) - c^{r + 1} $ for every integer $ r \geq m $. \newline If, instead of the conditions \ref{Cond6} and \ref{Cond7}, the sequence {$ t ( n ) $} satisfies the conditions
\begin{enumerate}[(a)]
    \item \label{CondA} the inequality $b_2^{-1} < R$ holds and
    \item \label{CondB} the inequality $t ( r ) < b_2^{r / 3}$ holds for every integer $r \geq 1$,
\end{enumerate}
then $ s ( r ) = E ( r , b_2 ) - c^{r + 1} $ for every integer $ r \geq 1 $. \end{theorem}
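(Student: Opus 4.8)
The plan is to reduce the elaborate arithmetic term $E(n,b)$ to the simple extraction expression $\lfloor b^{n^2}\GF_t(b^{-n})\rfloor \bmod b^n$ appearing in Theorem \ref{ThmExtraction1} and Theorem \ref{ThmExtraction2}, then apply those theorems to obtain $E(n,b)=t(n)$, and finally subtract $c^{n+1}$ via the identity $t(r)=s(r)+c^{r+1}$ to recover $s$. Thus the whole argument is one algebraic collapse followed by two citations.

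For the collapse, write $A=A_+-A_-$ and $B=B_+-B_-$, so that $h=\deg B$ and $\GF_t(z)=A(z)/B(z)$. Since $A_\pm,B_\pm\in\mathbb{N}[z]$ with $\deg A_\pm\le\deg(A_+-A_-)<h$ (the numerator degree bound coming from Proposition \ref{PropCharacterization}) and $\deg B_\pm\le h$, clearing the negative powers of $b$ turns each of $b^{n^2+hn}A_\pm(b^{-n})$ and $b^{hn}B_\pm(b^{-n})$ into a genuine natural number. The real task is then to show that the two truncated subtractions in $E(n,b)$ coincide with the corresponding ordinary subtractions, i.e.\ that
\[
b^{n^2+hn}A(b^{-n})\ge 0 \qquad\text{and}\qquad b^{hn}B(b^{-n})>0 .
\]
Granting this, the bracketed fraction equals $b^{n^2+hn}A(b^{-n})\big/\bigl(b^{hn}B(b^{-n})\bigr)=b^{n^2}\GF_t(b^{-n})$, the quotient being legitimate because $b^{-n}$ lies inside the disc of convergence (this is where $b_1^{-m}<R$ and $b_2^{-1}<R$ enter), and hence $E(n,b)=\lfloor b^{n^2}\GF_t(b^{-n})\rfloor \bmod b^n$.

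I expect the positivity of $B(b^{-n})$ to be the crux, and it is exactly where the irreducibility hypothesis is used. Since $\GF_t$ is analytic on $\{|z|<R\}$ and coincides there with the reduced fraction $A/B$, no zero of $B$ can lie in that disc: such a zero, uncancelled by $A$ because the fraction is irreducible, would be a pole of $\GF_t$, contradicting analyticity. Consequently $B$ has no zero on the segment $[0,b^{-n}]\subseteq\{|z|<R\}$; as $B$ is continuous and $B(0)=B_+(0)-B_-(0)>0$ by the hypothesis $B_-(0)<B_+(0)$, the intermediate value theorem forces $B(b^{-n})>0$. The numerator sign is then immediate: $A(b^{-n})=\GF_t(b^{-n})\,B(b^{-n})$ with $\GF_t(b^{-n})=\sum_i t(i)\,b^{-ni}\ge 0$ (all $t(i)$ being natural numbers, and in fact positive for some $i$), so $A(b^{-n})\ge 0$.

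It remains to invoke the extraction theorems. In the first regime the conditions $b_1\ge 2$, $n\ge m\ge 2$, $b_1^{-m}<R$ and $t(r)<b_1^{r-2}$ for $r\ge m$ are precisely the hypotheses of Theorem \ref{ThmExtraction1}, yielding $E(n,b_1)=t(n)$ for every $n\ge m$; in the second regime the conditions $b_2\ge 8$, $b_2^{-1}<R$ and $t(r)<b_2^{r/3}$ for $r\ge 1$ are precisely those of Theorem \ref{ThmExtraction2}, yielding $E(n,b_2)=t(n)$ for every $n\ge 1$. Subtracting $c^{n+1}$ and using $t(r)=s(r)+c^{r+1}$ then gives $s(r)=E(r,b_1)-c^{r+1}$ for $r\ge m$ and $s(r)=E(r,b_2)-c^{r+1}$ for $r\ge 1$, as claimed.
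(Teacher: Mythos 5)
Your proposal is correct and follows essentially the same route as the paper's proof: establish that $B_+-B_-$ has no zero in $[0,R)$ and is positive there by its value at $0$, deduce positivity of $A_+-A_-$ from that of $\GF_t$, conclude that the truncated subtractions in $E(n,b)$ are ordinary subtractions so that $E(n,b)=\lfloor b^{n^2}\GF_t(b^{-n})\rfloor\bmod b^n$, and then invoke Theorems \ref{ThmExtraction1} and \ref{ThmExtraction2} and the shift $t(r)=s(r)+c^{r+1}$. The only cosmetic difference is that you justify the absence of roots of $B_+-B_-$ in the disc directly by analyticity of $\GF_t$, whereas the paper phrases this as an application of Proposition \ref{PropCharacterization}; the substance is identical.
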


\begin{proof} First, notice that both conditions \ref{Cond6} and \ref{CondA} imply that $ R $ is positive.

If we are under the conditions \ref{Cond6} \& \ref{Cond7}, then Theorem \ref{ThmExtraction1} ensures that, for every integer $ r \geq m $, $$ t ( r ) = \left \lfloor b_1^{r^2} \GF_t(b_1^{- r}) \right \rfloor \bmod b_1^r . $$

If we are under the conditions \ref{CondA} \& \ref{CondB} instead, then Theorem \ref{ThmExtraction2} guarantees that, for every integer $r \geq 1$, $$ t ( r ) = \left \lfloor b_2^{r^2} \GF_t(b_2^{- r}) \right \rfloor \bmod b_2^r . $$

Either way, because all terms of {$ t ( n ) $} are non-negative, and at least one of them is positive, we know that $ \GF_t ( z ) $ is positive in $ [ 0 , R ) $.

The fact that the fraction $ ( A_+ ( z ) - A_- ( z ) ) / ( B_+ ( z ) - B_- ( z ) ) $ is irreducible and equal to $ \GF_t ( z ) $ implies, by applying Proposition \ref{PropCharacterization}, that $ \deg ( A_+ ( z ) - A_- ( z ) ) < h $ and the polynomial $ B_+ ( z ) - B_- ( z ) $ has no real root in $ [ 0 , R ) $.

Hence, by applying that $ B_+ ( 0 ) - B_- ( 0 ) > 0 $, we deduce that the polynomial $ B_+ ( z ) - B_- ( z ) $ is positive in $ [ 0 , R ) $ and, because $ \GF_t ( z ) $ is positive in $ [ 0 , R ) $, the polynomial $ A_+ ( z ) - A_- ( z ) $ is also positive in $ [ 0 , R ) $.

Therefore $ A_+(z) > A_-(z) $ and $ B_+(z) > B_-(z) $ for every $ z \in [ 0 , R ) $.

In particular, if $ b $ and $ n $ are two integers such that $ b > 0 $ and $ 0 < b^{- n} < R $, then $ A_+ ( b^{- n} ) > A_- ( b^{- n} ) $ and $ B_+ ( b^{- n} ) > B_- ( b^{- n} ) $, which, together with the fact that $ \deg ( A_+ ( z ) - A_- ( z ) ) < h $, yield that $$ \left \lfloor b^{n^2} \GF_t (b^{- n}) \right \rfloor \bmod b^n = E ( n , b ) . $$

Finally, by applying that $ t ( r ) = s ( r ) + c^{r + 1} $ for every integer $ r \geq 0 $, the conclusions follows. \end{proof}

\begin{remark} The expression \eqref{BigArithmeticTerm} is an arithmetic term. \end{remark}

\begin{remark} From Proposition \ref{PropCharacterization} and Theorem \ref{ThmMethod}, we can infer that the degree of the polynomials in the numerator and denominator of the generating functions are upper-bounded by $ d $. Therefore, the total number of arithmetic operations to perform is linear in $ d $. \end{remark}

Before stating Theorem \ref{ThmExistence}, we prove three easy lemmas which will help us to shorten its proof.

\begin{lemma}\label{LemmaRealInequality} If $ \alpha $, $ \beta $, $ \gamma $, $ \delta $ are real numbers for which $ 1 < \alpha < \beta $, then there is a real number $ \varepsilon $ such that $ {\alpha}^{x + \delta} < {\beta}^{x + \gamma} $ for every real number $ x \geq \varepsilon $. \end{lemma}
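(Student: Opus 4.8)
The plan is to linearize the inequality by taking logarithms. Since $1 < \alpha < \beta$, both bases are positive, and hence $\alpha^{x+\delta}$ and $\beta^{x+\gamma}$ are positive for every real $x$; moreover, the natural logarithm is strictly increasing, so the desired inequality $\alpha^{x+\delta} < \beta^{x+\gamma}$ is equivalent to
\[
(x + \delta) \ln \alpha < (x + \gamma) \ln \beta .
\]
First I would rewrite this as a linear inequality in $x$, collecting the $x$-terms on one side:
\[
x ( \ln \alpha - \ln \beta ) < \gamma \ln \beta - \delta \ln \alpha .
\]

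The key observation is that the coefficient $\ln \alpha - \ln \beta$ is strictly negative, because $\alpha < \beta$ forces $\ln \alpha < \ln \beta$. Dividing both sides by this negative quantity (and reversing the inequality accordingly) turns the condition into
\[
x > \frac{\gamma \ln \beta - \delta \ln \alpha}{\ln \alpha - \ln \beta} .
\]
I would therefore simply set
\[
\varepsilon = \frac{\gamma \ln \beta - \delta \ln \alpha}{\ln \alpha - \ln \beta} ,
\]
which is a well-defined real number precisely because the denominator is non-zero, and then verify that every real $x \geq \varepsilon$ (in fact, every $x > \varepsilon$, but taking $\varepsilon$ slightly larger closes the boundary case if a strict inequality at $x = \varepsilon$ is wanted) satisfies the chain of equivalences above, yielding $\alpha^{x+\delta} < \beta^{x+\gamma}$.

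There is no genuine obstacle here: the statement is essentially the elementary fact that an exponential with a larger base eventually dominates one with a smaller base, regardless of the fixed shifts $\delta$ and $\gamma$ in the exponents, since those shifts only contribute a bounded additive correction after taking logarithms. The only point requiring care is the sign reversal when dividing by $\ln \alpha - \ln \beta < 0$; getting this direction right is what guarantees that the inequality holds for all sufficiently large $x$ rather than for all sufficiently small $x$. I would make the monotonicity of the logarithm explicit so that the passage between the exponential inequality and its linear form is justified in both directions.
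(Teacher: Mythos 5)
Your proof is correct and takes essentially the same route as the paper, which rewrites the condition as the equivalent inequality $\alpha^{\delta}\beta^{-\gamma} < \left( \beta/\alpha \right)^{x}$ and invokes the fact that $\left( \beta/\alpha \right)^{x}$ tends to infinity; taking logarithms as you do is the same reduction made explicit, with the bonus of an explicit threshold. Your parenthetical caveat is the right one: at your exact $\varepsilon$ the two sides are equal, so one must indeed enlarge $\varepsilon$ slightly to get the strict inequality for all $x \geq \varepsilon$.
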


\begin{proof}The condition $ {\alpha}^{x + \delta} < {\beta}^{x + \gamma} $ is equivalent with $$ {\alpha}^{\delta} {\beta}^{- \gamma} < {\left ( \frac{\beta}{\alpha} \right )}^x , $$ which certainly holds for every sufficiently large real value of $ x $ because the function $ ( \beta / \alpha )^x $ tends to infinity as $ x $ grows. \end{proof} 

\begin{lemma}\label{Lemman-2Inequality} For every two integers $ b $ and $ c $ such that $ b > c \geq 0 $, there is an integer $ m \geq 3 $ such that $ c^{n + 1} < b^{n - 2} $ for every integer $ n \geq m $. \end{lemma}

\begin{proof} If $ c < 2 $, then take $ m = 3 $. Otherwise, we have that $ 1 < c < b $ so, by applying Lemma \ref{LemmaRealInequality}, there is a real number $ \varepsilon $ such that $ c^{x + 1} < b^{x + ( - 2 )} $ for every real number $ x \geq \varepsilon $. Now, set $ m = \max ( 3 , \lceil \varepsilon \rceil ) $ and the conclusion follows. \end{proof} 

\begin{lemma}\label{Lemman/3Inequality} For every two integers $ b $ and $ c $ such that $ b > c^6 \geq c \geq 0 $, we have that $ c^{n + 1} < b^{n / 3} $ for every integer $ n \geq 1 $. \end{lemma}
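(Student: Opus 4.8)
The plan is to route the comparison through the intermediate quantity $c^{2n}$, exploiting the fact that the hypothesis $b > c^6$ is exactly what is needed to dominate $c^{6 \cdot (n/3)} = c^{2n}$. First I would establish the chain
\[
c^{n+1} \leq c^{2n} < b^{n/3},
\]
valid for every integer $n \geq 1$, and then read off the desired strict inequality $c^{n+1} < b^{n/3}$ directly from it.

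For the left-hand step, note that $2n \geq n + 1$ holds precisely when $n \geq 1$, so $c^{2n} \geq c^{n+1}$ whenever the base satisfies $c \geq 1$; the degenerate case $c = 0$ I would dispose of separately, observing that then $c^{n+1} = 0$ while $b > c^6 = 0$ forces $b \geq 1$ and hence $b^{n/3} \geq 1 > 0$. For the right-hand step, since $b$ and $c^6$ are positive reals with $b > c^6$ and the map $t \mapsto t^{n/3}$ is strictly increasing on $( 0 , \infty )$ (because $n/3 > 0$), raising both sides to the power $n/3$ preserves the strict inequality and yields $b^{n/3} > ( c^6 )^{n/3} = c^{2n}$.

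The only point demanding a little care is that the left-hand step is merely a non-strict inequality, with equality occurring exactly at $n = 1$; the overall strictness therefore has to come entirely from the right-hand step, which is why the monotonicity argument must be recorded with a strict inequality rather than a weak one. I do not anticipate any genuine obstacle here: unlike Lemma \ref{Lemman-2Inequality}, where the weaker hypothesis $b > c$ only secures the estimate beyond some threshold $m$, the stronger hypothesis $b > c^6$ makes the bound hold from $n = 1$ onward with no threshold at all, mirroring the way Theorem \ref{ThmExtraction2} trades a larger base for validity at every $n \geq 1$.
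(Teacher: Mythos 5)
Your argument is correct and is essentially the paper's own proof read in the opposite direction: the paper passes through the intermediate quantity $b^{(n+1)/6}$ via $c < \sqrt[6]{b}$ and the observation that $(n+1)/6 \leq n/3$ for $n \geq 1$, while you pass through $c^{2n} = (c^6)^{n/3}$ using the equivalent fact $n+1 \leq 2n$. Both hinge on the same arithmetic inequality and the monotonicity of power functions, so there is nothing substantive to distinguish the two.
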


\begin{proof} Observe that, for every integer $ n \geq 1 $, the inequality $ n / 6 + 1 / 6 \leq n / 3 $ is equivalent with $ n \geq 1 $, so $$ c^{n + 1} < {\left ( \sqrt[6]{b} \right )}^{n + 1} = b^{n / 6 + 1 / 6} \leq b^{n/3} . $$ \end{proof}

\begin{theorem}\label{ThmExistence} If {$ s ( n ) $ is an integer sequence with} some non-zero term, $ d $ is a positive integer, $ \alpha_1 $, $ \ldots $, {$ \alpha_d $ are rational numbers}, $ \alpha_d $ is non-zero and $$ s ( n + d ) + \alpha_1 s ( n + d - 1 ) + \ldots + \alpha_d s ( n ) = 0 $$ for every integer $ n \geq 0 $, then there are four integers $ b_1 \geq 2 $, $ b_2 \geq 8 $, $ c \geq 0 $ and $ m \geq 3 $ such that one can apply Theorem \ref{ThmMethod} (and, consequently, {$ s ( n ) $} can be written as the difference of two arithmetic terms). \end{theorem}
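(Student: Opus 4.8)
The plan is to verify the hypotheses of Theorem \ref{ThmMethod} directly, by constructing the auxiliary sequence $t(n)$ and all of its associated data explicitly from $s(n)$. First I would produce the constant $c$ and the sequence $t$. Since $s(n)$ satisfies a homogeneous linear recurrence of constant rational coefficients with $\alpha_d \neq 0$, it is C-recursive, so Lemma \ref{LemmaGeneralInequality} furnishes an integer $c \geq 1$ with $|s(n)| < c^{n+1}$ for every $n$. Setting $t(n) = s(n) + c^{n+1}$, each $t(n)$ is then a \emph{positive} integer, which simultaneously places $t$ among the sequences of natural numbers, gives condition 1, and gives condition 2 by definition; moreover $t(n) < 2c^{n+1} \leq (2c)^{n+1}$.

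Next I would compute the generating function. As $t$ is the sum of the C-recursive sequences $s(n)$ and $c^{n+1}$, it is itself C-recursive, and by Proposition \ref{PropCharacterization} we have $\GF_s(z) = A(z)/B(z)$ with $B(z) = 1 + \alpha_1 z + \ldots + \alpha_d z^d$ and $\deg(A) < d$, while $\GF_{c^{n+1}}(z) = c/(1 - cz)$. Hence
\[ \GF_t(z) = \frac{A(z)}{B(z)} + \frac{c}{1 - cz} \in \mathbb{Q}(z) \]
is a proper rational function (it vanishes at infinity). I would reduce it to lowest terms over $\mathbb{Q}$, normalizing the sign so that the denominator has positive constant term, writing $\GF_t = \tilde{P}/\tilde{Q}$ with $\gcd(\tilde{P}, \tilde{Q}) = 1$, $\tilde{Q}(0) > 0$ and $\deg(\tilde{P}) < \deg(\tilde{Q})$; since $\tilde{Q}(0) \neq 0$, the radius of convergence $R$ is positive. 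Multiplying numerator and denominator by a common positive integer $N$ that clears all denominators and splitting each resulting integer polynomial into its nonnegative part and the negation of its negative part yields $N\tilde{P} = A_+ - A_-$ and $N\tilde{Q} = B_+ - B_-$ with $A_\pm, B_\pm \in \mathbb{N}[z]$. This gives condition 3; condition 4 persists because the only common factor introduced is the constant $N$; and $B_+(0) - B_-(0) = N\tilde{Q}(0) > 0$ gives condition 5.

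It then remains to choose the bases. I would take $b_1 = 2c + 1 > 2c$, so Lemma \ref{Lemman-2Inequality} supplies an $m_0 \geq 3$ with $(2c)^{n+1} < b_1^{n-2}$ for all $n \geq m_0$; combined with $t(r) < (2c)^{r+1}$ this yields condition \ref{Cond7} for $r \geq m_0$, and enlarging $m \geq m_0$ until $b_1^{-m} < R$ (possible because $R > 0$) secures condition \ref{Cond6} without disturbing condition \ref{Cond7}. For the stronger regime I would set $b_2 = \max(8, (2c)^6 + 1, \lceil 1/R \rceil + 1)$; then $b_2 > (2c)^6$, so Lemma \ref{Lemman/3Inequality} gives $(2c)^{n+1} < b_2^{n/3}$ for all $n \geq 1$ and hence condition \ref{CondB}, while $b_2 > 1/R$ gives condition \ref{CondA} and $b_2 \geq 8$ holds by construction. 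With all hypotheses in place, Theorem \ref{ThmMethod} applies and expresses $s$ as the difference of two arithmetic terms.

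I expect the central difficulty to lie in the generating-function step: one must exhibit $\GF_t$ as an \emph{irreducible} quotient of two polynomials with natural-number coefficients whose denominator has a strictly positive (hence dominant) constant term, and confirm that this presentation is consistent with the positivity of $\GF_t$ on $[0,R)$ and with the location of its poles. Once this normal form is secured, the selection of $b_1$, $b_2$ and $m$ is routine bookkeeping enabled by Lemmas \ref{LemmaRealInequality}, \ref{Lemman-2Inequality} and \ref{Lemman/3Inequality}.
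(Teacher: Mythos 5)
Your proposal is correct and follows essentially the same route as the paper: introduce $t(n) = s(n) + c^{n+1}$ via Lemma \ref{LemmaGeneralInequality}, obtain $\GF_t(z) = \GF_s(z) + c/(1-cz) \in \mathbb{Q}(z)$ via Proposition \ref{PropCharacterization}, and select $b_1$, $m$ and $b_2$ via Lemmas \ref{Lemman-2Inequality} and \ref{Lemman/3Inequality}. The only differences are cosmetic --- you bound $t(n) < (2c)^{n+1}$ directly instead of invoking Lemma \ref{LemmaGeneralInequality} a second time for a constant $c_t$, you skip the paper's separate $c=0$ case for non-negative sequences, and you spell out the construction of $A_\pm$, $B_\pm$ and the sign normalization of the denominator (which the paper's proof leaves implicit) --- all of which are sound.
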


\begin{proof} We show that we can find $ b_1 \geq 2 $, $ b_2 \geq 8 $, $ c \geq 0 $ and $ m \geq 3$ such that, if {$ t ( n ) := s ( n ) + c^{n + 1} $} and $ R $ is the radius of convergence of {$ \GF_t ( z ) $} at zero, then:
\begin{enumerate}
    \item no term of {$ t ( n ) $} is negative,
    \item some term of {$ t ( n ) $} is positive,
    \item the expression $ \GF_t ( z ) $ belongs to $ \mathbb{Q} ( z ) $,
    \item the inequality $ b_1^{- m} < R $ holds,
    \item the inequality $ t ( n ) < b_1^{n - 2} $ holds for every integer $ n \geq m $,
    \item the inequality $b_2^{-1} < R$ holds and
    \item the inequality $ t ( n ) < b_2^{n / 3} $ holds for every integer $ n \geq 1 $.
\end{enumerate}

If {$ s ( n ) $ is a sequence of natural numbers}, let $ c = 0 $; otherwise let $ c $ be a positive integer such that $ |s(n)| < c^{n+1} $ for every integer $ n \geq 0 $, which exists by applying Lemma \ref{LemmaGeneralInequality} to the fact that {$ s ( n ) $} is C-recursive. In both cases, we get that {$ t ( n ) $ is a sequence of natural numbers}.

If {$ s ( n ) $ is a sequence of natural numbers, then $ s ( n ) = t ( n ) $} (because we took $ c = 0 $ in this case) and consequently some term of {$ t ( n ) $} is positive (because {$ s ( n ) $ has some non-zero term}). Otherwise we also get that some term of {$ t ( n ) $} is positive: indeed, if every term of {$ t ( n ) $} were zero, then $ s ( n ) $ would be equal to {$ - c^{n + 1} $}, in contradiction with the fact that $ | s ( n ) | < c^{n + 1} $ for every integer $ n \geq 0 $.

The coefficients $ \alpha_1 $, $ \ldots $, $ \alpha_d $ are rational so, by applying Proposition \ref{PropCharacterization}, there are two polynomials $ A_s $ and $ B_s $ in $ \mathbb{Q} [ z ] $ such that $ \deg ( A_s ) < \deg ( B_s ) $ and the fraction $ A_s(z) / B_s(z) $, which we can suppose irreducible, is equal to $ \GF_s ( z ) $.

Hence $ \GF_t(z) = A_s(z) / B_s(z) + c / ( 1 - c z ) \in \mathbb Q(z) $ so, by again applying Proposition \ref{PropCharacterization}, the sequence {$ t ( n ) $} is C-recursive.

By again applying Lemma \ref{LemmaGeneralInequality}, there is an integer $ c_t \geq 1 $ such that $t(n) < c_t^{n+1}$ for every integer $ n \geq 0 $.

According to Lemma \ref{Lemman-2Inequality}, there are two integers $ b_1 > c_t \geq 1 $ and $ m \geq 3 $ such that $ t ( n ) < c_t^{n+1} < b_1^{n-2} $ for every integer $ n \geq m $. And, of course, the numbers $ m $ and $ b_1 $ can be chosen large enough to also satisfy the inequality $ b_1^{- m} < R $.

Finally, according to Lemma \ref{Lemman/3Inequality}, there is an integer $ b_2 \geq \max ( 8 , c_t^6 + 1 ) $ such that $ t(n) < c_t^{n+1} < b_2^{n/3} $ for every integer $ n \geq 1 $. And again, the number $ b_2 $ can be chosen large enough to also satisfy the inequality $ b_2^{- 1} < R $. \end{proof}

\begin{remark}\label{RemarkNaturalCase} In the case in which we wish to apply Theorem \ref{ThmMethod} to some C-recursive  sequence {of natural numbers} {$ s ( n ) $}, the proof of Theorem \ref{ThmExistence} shows that it is sufficient to take $ c = 0 $. \end{remark}

{Theorem \ref{ThmExistence}} ensures the existence of an integer $b$ such that the arithmetic term displayed in Theorem \ref{ThmMethod} represents the sequence for every positive argument. But the conditions of {Theorem \ref{ThmExistence}} lead to values of $b$ which are usually larger than necessary. If one wants to check numerically the arithmetic-term representation, smaller values of $ b $ are of interest. For this reason, in order to compute an arithmetic term representation that holds for every positive argument, Method \ref{MainMethod} can be applied instead.

\begin{method}\label{MainMethod} Given a non-zero C-recursive integer sequence {$ s ( n ) $} such that the coefficients of its recurrence formula are rational, this method computes an arithmetic term $ E ( x , y ) $ and two non-negative integers $ b ' $ and $ c $ such that $ s ( n ) = E ( n , b ' ) - c^{n + 1} $ for every integer $ n \geq 1 $.
\begin{enumerate}
    \item If {we have a proof that} no term of {$ s ( n ) $} is negative, then {we can} set $ c = 0 $. Otherwise {we can always} find {an integer $ c \geq 1 $} such that, for every integer $ n \geq 0 $, the inequality $ t ( n ) := s ( n ) + c^{n + 1} > 0 $ holds. In order to find $ c $, one can emulate the proof of Lemma \ref{LemmaGeneralInequality}.
    \item Calculate {$ \GF_t ( z ) $} and its radius of convergence at zero, which we can {call} $ R $. In order to find {$ \GF_t ( z ) $}, compute first {$ \GF_s ( z ) $} with the method shown in the proof of Proposition \ref{PropCharacterization}, and then apply the identity $ \GF_t(z) = \GF_s(z) + c / ( 1 - c z ) $.
    \item Find two integers $ b_1 $ and $ m $ such that $ b_1^{- m} < R $ and $ t ( n ) < b_1^{n - 2} $ for every integer $ n \geq m $. In order to find $b_1$, one can emulate the proof of Lemma \ref{Lemman-2Inequality}.
    \item Write $ \lfloor b^{n^2} \GF_t ( b^{- n} ) \rfloor \bmod b^n $ as an arithmetic term $ E ( n , b ) $ (e.g., as in the statement of Theorem \ref{ThmMethod}).
    \item Find an integer $ b' \geq b_1 $ such that $ s(n) = E ( n , b') - c^{n + 1} $ for the remaining positive integers $ n \in \{ 1 , \ldots , m - 1 \} $. {In order to} find a suitable $b'$, one might first emulate the proof of Lemma \ref{Lemman/3Inequality} to find an integer $ b_2 \geq 8 $ such that $t(n) < b_2^{n/3}$ for every integer $n \geq 1$, and then look, by binary search, for the minimum $ b ' $ in $ \{ b_1 , \ldots , b_2 \} $ such that $ s(n) = E ( n , b') - c^{n + 1} $ for every integer $n\geq 1$.
\end{enumerate} \end{method}

{\begin{remark} If the sequence $ s ( n ) $ has non-negative terms only but, in absence of a proof of this fact, one finds an integer $ c > 0 $ such that $|s(n)| < c^{n+1}$ for every integer $ n \geq 0 $, and considers the sequence $t(n) := s(n) + c^{n+1}$, which has positive terms only, then the procedure works, but the final representation of $s(n)$ will be more complicated than necessary. On the other hand, if the sequence $s(n)$ has some negative term, then it is compulsory to consider the sequence $t(n) := s(n) + c^{n+1}$: indeed, the operation $\bmod$ takes non-negative arguments only, so a sequence with negative terms could not be represented by the formulas given by Theorems \ref{ThmExtraction1} and \ref{ThmExtraction2}. \end{remark}}

All the examples below were constructed by using Method \ref{MainMethod}, which is based on Theorem \ref{ThmExtraction1}. The role of Theorem \ref{ThmExtraction2} is only to guarantee that this strategy is successful.

\section{Lucas sequences}\label{SectionLucas}

Consider two integers $ P $ and $ Q $ for which $ 4 Q \notin \{ P^2 , 0 \} $. The Lucas sequences are defined as follows.

The \textbf{Lucas sequence} of the \textbf{first kind} (resp., \textbf{second kind}) with respect to $ ( P , Q ) $, which is denoted by {$ U ( P , Q ) ( n ) $} (resp., {$ V ( P , Q ) ( n ) $}), is the sequence {$ s ( n ) $} such that $ s ( 0 ) = 0 $ (resp., $ s ( 0 ) = 2 $), $ s ( 1 ) = 1 $ (resp., $ s ( 1 ) = P $) and $ s ( n + 2 ) = P s ( n + 1 ) - Q s ( n ) $ for every integer $ n \geq 0 $ (see the Encyclopedia of Mathematics \cite{Encyclopedia}).

For example, {the sequence $ U ( 1 , - 1 ) ( n ) $ is $ F ( n ) $}, the Fibonacci sequence, which we have already mentioned in {Section \ref{SectionHypergeometric}}.

Theorem \ref{ThmFormulasLucas} provides formulas for $ U ( P , Q ) ( n ) $ and $ V ( P , Q ) ( n ) $, which we denote by $ U (P, Q, n) $ and $ V (P, Q, n) $, respectively.

\begin{theorem}\label{ThmFormulasLucas} (Encyclopedia of Mathematics \cite{Encyclopedia}) If $ n $ is a non-negative integer, $ \gamma = \sqrt{P^2 - 4 Q} $, $ \alpha = ( P + \gamma ) / 2 $ and $ \beta = ( P - \gamma ) / 2 $, then \begin{eqnarray*} U ( P , Q , n ) &=& \dfrac{\alpha^n - \beta^n}{\alpha - \beta} , \\ V ( P , Q , n ) &=& \alpha^n + \beta^n . \end{eqnarray*} \end{theorem}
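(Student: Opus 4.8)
The plan is to recognize the pair $ \alpha $, $ \beta $ as the two roots of the characteristic polynomial $ x^2 - P x + Q $ and then to verify that the two proposed closed forms satisfy exactly the same initial conditions and the same recurrence as the defining sequences $ U ( P , Q ) $ and $ V ( P , Q ) $; a uniqueness argument for second-order linear recurrences then finishes the proof. Concretely, I would set $ u ( n ) = ( \alpha^n - \beta^n ) / ( \alpha - \beta ) $ and $ v ( n ) = \alpha^n + \beta^n $ and show $ u ( n ) = U ( P , Q , n ) $ and $ v ( n ) = V ( P , Q , n ) $ for every non-negative integer $ n $.

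First I would record, via Vieta's formulas, that $ \alpha + \beta = P $ and $ \alpha \beta = ( P^2 - \gamma^2 ) / 4 = ( P^2 - ( P^2 - 4 Q ) ) / 4 = Q $, so that $ \alpha $ and $ \beta $ are precisely the roots of $ x^2 - P x + Q $; equivalently, $ \alpha^2 = P \alpha - Q $ and $ \beta^2 = P \beta - Q $. Next I would check the initial conditions: $ u ( 0 ) = 0 $, $ u ( 1 ) = ( \alpha - \beta ) / ( \alpha - \beta ) = 1 $, $ v ( 0 ) = 2 $ and $ v ( 1 ) = \alpha + \beta = P $, matching the definitions of the two Lucas sequences. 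For the recurrence, I would multiply the identity $ \alpha^2 = P \alpha - Q $ by $ \alpha^n $ (and similarly for $ \beta $) to obtain $ \alpha^{n + 2} = P \alpha^{n + 1} - Q \alpha^n $ and $ \beta^{n + 2} = P \beta^{n + 1} - Q \beta^n $; adding these yields $ v ( n + 2 ) = P v ( n + 1 ) - Q v ( n ) $, while subtracting them and dividing by $ \alpha - \beta $ yields $ u ( n + 2 ) = P u ( n + 1 ) - Q u ( n ) $. Since both candidate sequences agree with the defining sequences at $ n = 0 $ and $ n = 1 $ and obey the same second-order recurrence $ s ( n + 2 ) = P s ( n + 1 ) - Q s ( n ) $, an immediate induction gives equality for all $ n \geq 0 $.

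The main point requiring care is well-definedness rather than any deep difficulty. The division by $ \alpha - \beta = \gamma $ in the formula for $ U ( P , Q , n ) $ is legitimate only because the hypothesis $ 4 Q \neq P^2 $ forces $ \gamma = \sqrt{P^2 - 4 Q} \neq 0 $, hence $ \alpha \neq \beta $; I would flag this explicitly. I would also note that, although $ \gamma $ may be imaginary when $ P^2 - 4 Q < 0 $, the computations above are purely algebraic over $ \mathbb{C} $ and remain valid, the resulting integer values being guaranteed by the real recurrence itself. (The remaining condition $ 4 Q \neq 0 $ merely ensures $ \alpha \beta = Q \neq 0 $, so that the recurrence is genuinely of order two, and is not otherwise needed for the identities.)
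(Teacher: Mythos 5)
Your proposal is correct. Note that the paper does not prove Theorem \ref{ThmFormulasLucas} at all: it is stated as a classical result cited from the Encyclopedia of Mathematics, so there is no in-paper argument to compare against. Your derivation is the standard one (Vieta to identify $\alpha$, $\beta$ as the roots of $x^2 - Px + Q$, matching of initial conditions, the recurrence for powers of the roots, and induction), and you correctly flag the one point of well-definedness, namely that the standing hypothesis $4Q \neq P^2$ guarantees $\alpha \neq \beta$ so that the division by $\alpha - \beta$ in the formula for $U(P,Q,n)$ is legitimate.
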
 

Notice that if $ \alpha $ and $ \beta $ from Theorem \ref{ThmFormulasLucas} are rational, then these representations can be easily transformed into arithmetic terms. But the method explained in Method \ref{MainMethod} is applicable even if $ \alpha $ and $ \beta $ are irrational, so in the following lines we explore its application to the case of Lucas sequences of both kinds, in full generality.

Let us denote the generating functions of {the sequences $ U ( P , Q ) ( n ) $} and {$ V ( P , Q ) ( n ) $} by $ u(P, Q, z) $ and $ v(P, Q, z) $, respectively. Corollary \ref{CorGFLucas}, which is straightforward from Proposition \ref{PropCharacterization} and the rules of recurrence, provides formulas for these generating functions.

\begin{corollary}\label{CorGFLucas} (Encyclopedia of Mathematics \cite{Encyclopedia}) We have that \begin{eqnarray*} u(P, Q, z) &=& \frac{z}{1 - Pz + Qz^2} , \\ v(P, Q, z) &=& \frac{2 - Pz}{1 - Pz + Qz^2} . \end{eqnarray*} \end{corollary}

By applying Method \ref{MainMethod}, we obtain Corollary \ref{CorFormulasLucas}.

\begin{corollary}\label{CorFormulasLucas} There is an integer $ c \geq 0 $ such that, for every sufficiently large integer $ b \geq 2 $, the following identities hold for every integer $ n \geq 1 $: \begin{eqnarray*} U ( P , Q , n ) &=& \left\lfloor \frac{cb^{n^2+3n} - (cP-1) b^{n^2 + 2n} + c(Q-1)b^{n^2 + n}}{b^{3n}- (c+P)b^{2n} + (cP + Q) b^n - cQ} \right\rfloor \bmod b^n - c^{n+1} , \\ V ( P , Q , n ) &=& \left\lfloor \frac{(c + 2) b^{n^2+3n} - (2c + P + cP) b^{n^2 + 2n} + c(P+Q) b^{n^2 + n}}{b^{3n}- (c+P)b^{2n} + (cP + Q) b^n - cQ} \right\rfloor \bmod b^n - c^{n+1} . \end{eqnarray*} \end{corollary}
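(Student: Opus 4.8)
The plan is to derive both identities as direct applications of Method \ref{MainMethod}, which in turn rests on Theorem \ref{ThmMethod}. The strategy is to fit the Lucas sequences $ U ( P , Q ) ( n ) $ and $ V ( P , Q ) ( n ) $ into the template of Theorem \ref{ThmMethod} by explicitly computing the shifted generating function $ \GF_t ( z ) $ for the auxiliary sequence $ t ( n ) := s ( n ) + c^{n + 1} $, and then reading off the polynomials $ A_+ $, $ A_- $, $ B_+ $, $ B_- $ that appear in the arithmetic term \eqref{BigArithmeticTerm}. First I would invoke Lemma \ref{LemmaGeneralInequality} (applied to each of the two Lucas sequences, whose C-recursiveness is immediate from their defining recurrence) to obtain a single integer $ c \geq 0 $ that works simultaneously: for genuinely non-negative Lucas sequences one may take $ c = 0 $, but in general $ c $ must dominate both sequences, so I would choose $ c $ large enough that $ | U ( P , Q , n ) | < c^{n + 1} $ and $ | V ( P , Q , n ) | < c^{n + 1} $ for every $ n $.

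Next I would compute $ \GF_t ( z ) = \GF_s ( z ) + c / ( 1 - c z ) $, using the closed forms for $ \GF_s $ supplied by Corollary \ref{CorGFLucas}. For the first-kind sequence this means adding $ c / ( 1 - c z ) $ to $ z / ( 1 - P z + Q z^2 ) $; clearing denominators, the common denominator becomes $ ( 1 - P z + Q z^2 ) ( 1 - c z ) $, which expands to $ 1 - ( c + P ) z + ( c P + Q ) z^2 - c Q z^3 $. This is exactly the polynomial whose coefficients, after the substitution $ z \mapsto b^{- n} $ and multiplication by $ b^{h n} $ with $ h = 3 $, produce the denominator $ b^{3 n} - ( c + P ) b^{2 n} + ( c P + Q ) b^n - c Q $ displayed in the statement. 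The numerator is obtained from $ z ( 1 - c z ) + c ( 1 - P z + Q z^2 ) = c + ( 1 - c P ) z + c ( Q - 1 ) z^2 $ after the analogous scaling, matching $ c b^{n^2 + 3 n} - ( c P - 1 ) b^{n^2 + 2 n} + c ( Q - 1 ) b^{n^2 + n} $. The second-kind computation is the same bookkeeping applied to $ ( 2 - P z ) / ( 1 - P z + Q z^2 ) + c / ( 1 - c z ) $, whose numerator $ ( 2 - P z ) ( 1 - c z ) + c ( 1 - P z + Q z^2 ) = ( c + 2 ) - ( 2 c + P + c P ) z + c ( P + Q ) z^2 $ yields the displayed numerator.

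With these explicit polynomials in hand, I would verify the hypotheses of Theorem \ref{ThmMethod}: condition (3) holds by the generating-function computation just described; condition (5), namely $ B_- ( 0 ) < B_+ ( 0 ) $, holds because $ B_+ ( 0 ) = 1 $ while the negative-coefficient part contributes nothing at $ z = 0 $; and the growth conditions (6)--(7) or (a)--(b) are secured by Lemmas \ref{Lemman-2Inequality} and \ref{Lemman/3Inequality} for $ b $ chosen sufficiently large, exactly as in the proof of Theorem \ref{ThmExistence}. Irreducibility (condition (4)) follows from the degree bound $ \deg ( A_s ) < \deg ( B_s ) $ in Corollary \ref{CorGFLucas} together with the standard fact that adding the geometric contribution $ c / ( 1 - c z ) $ keeps the fraction proper; one must check the added factor $ ( 1 - c z ) $ does not cancel, which it does not since $ c $ is not a root of $ 1 - P z + Q z^2 $ for the relevant values. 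Theorem \ref{ThmMethod} then gives $ s ( n ) = E ( n , b ) - c^{n + 1} $ for all large $ n $, and Method \ref{MainMethod} extends this to all $ n \geq 1 $.

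The main obstacle I anticipate is the separation of each numerator and denominator polynomial into its non-negative part $ A_+ , B_+ $ and its negated-negative part $ A_- , B_- $ in a way that is uniform in the parameters $ P $ and $ Q $, whose signs are not fixed. Since $ P $ and $ Q $ may be negative, the coefficients $ c + P $, $ c P + Q $, $ c Q $, and their second-kind analogues can have either sign depending on $ P $, $ Q $, and the chosen $ c $. This is precisely why the statement fixes only the existence of a single $ c \geq 0 $ and a threshold for $ b $ rather than giving $ c $ explicitly: the truncated-subtraction structure of \eqref{BigArithmeticTerm} absorbs the sign ambiguity automatically, because $ x \dotdiv y $ correctly computes $ \max ( x - y , 0 ) $ regardless of how the monomials are distributed between the plus- and minus-parts, as long as the genuine polynomial difference $ A_+ - A_- $ (respectively $ B_+ - B_- $) is positive on $ [ 0 , R ) $, which the positivity argument inside the proof of Theorem \ref{ThmMethod} guarantees once $ c $ is large enough to make $ t ( n ) $ non-negative. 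Hence the displayed identities should be read as the evaluation of the arithmetic term for any admissible splitting, and the bulk of the remaining work is the routine polynomial arithmetic sketched above rather than any conceptual difficulty.
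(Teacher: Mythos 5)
Your proposal is correct and follows essentially the same route as the paper: choose $c$ via Lemma \ref{LemmaGeneralInequality}, pass to $t(n)=s(n)+c^{n+1}$, compute $\GF_t(z)=\GF_s(z)+c/(1-cz)$ over the common denominator $(1-Pz+Qz^2)(1-cz)$ to read off the displayed numerators and denominators, and then pick $b$ large enough for Theorem \ref{ThmMethod} to apply. The paper's own proof is just a terser version of this (it even omits the verification of the hypotheses of Theorem \ref{ThmMethod} that you spell out), so there is nothing substantive to add.
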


\begin{proof} We do the proof only for {the sequence $ U ( P , Q ) ( n ) $}, since for {$ V ( P , Q ) ( n ) $} it is analogous.

We know that $$ U ( P , Q , n + 2 ) - P U ( P , Q , n + 1 ) + Q U ( P , Q , n ) = 0 $$ for every integer $ n \geq 0 $ and $ U ( P , Q , 1 ) \neq 0 $.

We find an integer $ c \geq 0 $ such that, for every integer $ n \geq 0 $, $ u(P, Q, n) < c^{n+1} $.

Let {$ t ( n ) = U(P,Q,n)+c^{n+1} $}.

Then we have that $$ \GF_t(z) = u(P,Q,z)+ \frac{c}{1 - cz} = \frac{ c-(cP -1)z+c(Q-1)z^2}{1 -(c+P)z+(cP +Q)z^2 - cQz^3}. $$

For this rational generating function, we find a suitable constant $b$. \end{proof}

The right-hand sides of the two identities from the statement of Corollary \ref{CorFormulasLucas} will be denoted by $ U ( b , c , P , Q , n ) $ and $ V ( b , c , P , Q , n ) $, respectively.

In the case that the sequences have only non-negative elements, one can take $ c = 0 $ and the expressions simplify as follows.

\begin{corollary}\label{CorNaturalLucas} If {$ U ( P , Q ) ( n ) $ and $ V ( P , Q ) ( n ) $ are sequences of natural numbers}, then, for every sufficiently large integer $ b \geq 2 $, the following identities hold for every integer $ n \geq 1 $: 
\begin{align*} 
U ( P , Q , n ) & = & U ( b , 0 , P , Q , n ) & = & \left \lfloor \frac{b^{n^2+n}}{b^{2n} - P b^n + Q} \right \rfloor \bmod b^n , \\ 
V ( P , Q , n ) & = & V ( b , 0 , P , Q , n ) & = & \left \lfloor \frac{2 b^{n^2+2n} - P b^{n^2 + n}}{b^{2n} - P b^n + Q} \right \rfloor \bmod b^n . 
\end{align*} 
\end{corollary}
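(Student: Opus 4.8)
The plan is to specialize Corollary \ref{CorFormulasLucas} to the case $ c = 0 $, which is legitimate precisely when the sequences $ U ( P , Q ) ( n ) $ and $ V ( P , Q ) ( n ) $ are sequences of natural numbers. Indeed, Remark \ref{RemarkNaturalCase} tells us that when the target sequence has non-negative terms only, one is allowed to take $ c = 0 $ in the application of Method \ref{MainMethod}; the constant $ c $ was introduced in the first place solely to shift a possibly negative sequence up into $ \mathbb{N} $, and that shift is unnecessary here. So I would begin by simply substituting $ c = 0 $ into the two displayed formulas of Corollary \ref{CorFormulasLucas} and verifying that the claimed simplifications are the result of routine algebra.

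Concretely, setting $ c = 0 $ kills the term $ c^{n+1} $, so the subtractive tail disappears and we are left with a pure arithmetic term. For the numerator of the $ U $-formula, the expression $ c b^{n^2 + 3n} - ( cP - 1 ) b^{n^2 + 2n} + c ( Q - 1 ) b^{n^2 + n} $ collapses, upon setting $ c = 0 $, to $ b^{n^2 + 2n} $; for the denominator, $ b^{3n} - ( c + P ) b^{2n} + ( cP + Q ) b^n - cQ $ collapses to $ b^{3n} - P b^{2n} + Q b^n $. Factoring out $ b^n $ from both numerator and denominator then yields $ b^{n^2 + n} / ( b^{2n} - P b^n + Q ) $, which is exactly the claimed $ U ( b , 0 , P , Q , n ) $. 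The same cancellation applied to the $ V $-formula gives numerator $ 2 b^{n^2 + 3n} - P b^{n^2 + 2n} $ and denominator $ b^{3n} - P b^{2n} + Q b^n $, and again dividing through by $ b^n $ produces $ ( 2 b^{n^2 + 2n} - P b^{n^2 + n} ) / ( b^{2n} - P b^n + Q ) $, as stated.

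The only genuine point requiring care, and the step I expect to be the main obstacle, is justifying that $ c = 0 $ is admissible under the hypothesis that the sequences take values in $ \mathbb{N} $. The issue is that Method \ref{MainMethod} and Theorem \ref{ThmMethod} rely on the auxiliary sequence $ t ( n ) = s ( n ) + c^{n+1} $ having strictly non-negative terms (with at least one positive), so that Theorem \ref{ThmExtraction1} applies and the floor-modulo extraction is valid. When $ c = 0 $ we have $ t ( n ) = s ( n ) $, so one must confirm that $ U ( P , Q ) ( n ) $ (resp.\ $ V ( P , Q ) ( n ) $) is itself a non-negative integer sequence with a positive term — which is exactly the standing hypothesis of the corollary — and that $ \GF_t = \GF_s $ remains the rational generating function from Corollary \ref{CorGFLucas} with the denominator factored appropriately. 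In particular the $ ( 1 - c z ) $ factor that produced the cubic denominator in Corollary \ref{CorFormulasLucas} degenerates when $ c = 0 $, which is precisely why the denominator drops from degree $ 3n $ to degree $ 2n $ and why the $ b^n $ cancellation is available.

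Having checked admissibility, the rest is immediate: the existence of a ``sufficiently large $ b \geq 2 $'' for which the identities hold for every integer $ n \geq 1 $ is inherited directly from Corollary \ref{CorFormulasLucas} (equivalently, from Theorem \ref{ThmExistence} with $ c = 0 $, as sanctioned by Remark \ref{RemarkNaturalCase}), since that result already furnishes the required threshold for $ b $. Thus I would present the proof as a short specialization argument: invoke Remark \ref{RemarkNaturalCase} to set $ c = 0 $, substitute into the formulas of Corollary \ref{CorFormulasLucas}, perform the cancellation of the common factor $ b^n $ in numerator and denominator, and conclude.
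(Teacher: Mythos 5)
Your proposal is correct and takes essentially the same route as the paper, which presents this corollary without a separate proof, prefaced only by the remark that ``in the case that the sequences have only non-negative elements, one can take $c = 0$ and the expressions simplify as follows''; your substitution of $c = 0$ into Corollary \ref{CorFormulasLucas}, the cancellation of the common factor $b^n$, and the appeal to Remark \ref{RemarkNaturalCase} for the admissibility of $c = 0$ are exactly the intended argument. The algebraic simplifications you carry out are all verified correctly.
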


\subsection{The Fibonacci sequence}

In this subsection, we apply the theory from Section \ref{SectionLucas} to the particular case of {$ F ( n ) $}.

First we get Lemma \ref{LemmaInequalityF}, which is a witness of Lemma \ref{LemmaGeneralInequality} for the sequence of Fibonacci.

\begin{lemma}\label{LemmaInequalityF} The inequality $ F(n) < 3^{n-2} $ holds for every integer $ n \geq 3 $. Also, $F(n) < 2^{n-2}$ for every integer $n\geq 4$. \end{lemma}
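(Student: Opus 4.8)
The plan is to prove both inequalities by induction, exploiting the fact that $F(n)$ satisfies the Fibonacci recurrence and that the bounding sequences $3^{n-2}$ and $2^{n-2}$ are themselves C-recursive (in fact geometric), so that the recurrence step reduces to a purely numerical comparison. Let me treat the bound $F(n) < 3^{n-2}$ first. I would verify the two base cases $n = 3$ and $n = 4$ directly: $F(3) = 2 < 3 = 3^{3-2}$ and $F(4) = 3 < 9 = 3^{4-2}$. These two anchors are what the inductive step needs, since the Fibonacci recurrence refers back two places.

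For the inductive step, fix an integer $n \geq 5$ and assume $F(k) < 3^{k-2}$ for all integers $k$ with $3 \leq k < n$. Then, using $F(n) = F(n-1) + F(n-2)$ together with the induction hypothesis applied to $n-1$ and $n-2$ (both of which are at least $3$ because $n \geq 5$), I would estimate
\begin{equation*}
F(n) = F(n-1) + F(n-2) < 3^{n-3} + 3^{n-4} = 3^{n-4}(3 + 1) = 4 \cdot 3^{n-4}.
\end{equation*}
The proof is then finished by the elementary observation that $4 \cdot 3^{n-4} < 9 \cdot 3^{n-4} = 3^{n-2}$, so $F(n) < 3^{n-2}$, as desired. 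The argument for the sharper bound $F(n) < 2^{n-2}$ on the range $n \geq 4$ is entirely parallel: I would check the base cases $n = 4$ and $n = 5$, where $F(4) = 3 < 4 = 2^{4-2}$ and $F(5) = 5 < 8 = 2^{5-2}$, and then for $n \geq 6$ bound $F(n) = F(n-1) + F(n-2) < 2^{n-3} + 2^{n-4} = 2^{n-4}(2 + 1) = 3 \cdot 2^{n-4} < 4 \cdot 2^{n-4} = 2^{n-2}$.

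The only subtlety, and the step I would be most careful about, is the bookkeeping on where each induction begins: the recurrence lowers the index by two, so to invoke the hypothesis at both $n-1$ and $n-2$ within the declared range I must secure \emph{two} consecutive base cases and start the inductive step only once $n$ is large enough that $n-2$ still falls in the valid range. For the $3^{n-2}$ bound this forces the step to begin at $n = 5$ (so that $n-2 \geq 3$), which is why verifying both $n=3$ and $n=4$ by hand is essential rather than optional; similarly the $2^{n-2}$ bound needs the pair $n=4,5$ before the step takes over at $n=6$. Beyond this indexing care, the calculations are routine, since in each case the slack factor (obtaining $4$ versus the available $9$, or $3$ versus the available $4$) leaves ample room. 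I would also note in passing that this lemma is precisely the witness required to apply Theorem \ref{ThmExtraction1} to $F(n)$ with $b = 3$ and $m = 3$ (respectively $b = 2$, $m = 4$), which is how it will be used downstream.
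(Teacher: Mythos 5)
Your proof is correct and takes essentially the same approach as the paper: two explicit base cases followed by an inductive step that bounds $F(n-1)+F(n-2)$ by a geometric estimate (the paper phrases the step uniformly for any base $b\geq 2$ via $b^{n-1}+b^{n-2}<2b^{n-1}\leq b^{n}$, which is the same computation you carry out separately for $b=3$ and $b=2$). Your attention to the indexing of the base cases matches the paper's use of two successive anchors, so there is nothing to add.
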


\begin{proof} The proof goes by induction.

Consider some integer $ b \geq 2 $.

If the inequality $ F(n) < b^{n-2} $ holds for two successive arguments $n$ and $n+1$, then it holds also for $n+2$: indeed, $$ F(n+2) = F(n+1) + F(n) < b^{n-1} + b^{n-2} < 2 b^{n-1} \leq b b^{n-1} = b^n. $$

And, by inspecting the first terms of the sequence {$ F ( n ) $}, we observe that $F(3) < 3$ and $F(4) < 9$, leading to the result for $b = 3$. For $b = 2$ the proof is similar. \end{proof}

The number $ (1 + \sqrt{5})/2 $, denoted by $ \varphi $, is known as the \textbf{golden ratio} (see Guy \cite[Section E25]{Guy}), and Identity \eqref{HypergeometricClosedFormF} can be easily transformed into $$ F(n) = \frac{\varphi^n - ( - \varphi )^{- n}}{2 \varphi - 1} . $$

\begin{corollary}\label{CorFormulaF} If $ n $ is a non-negative integer, then $$ F(n) = \left \lfloor \frac{3^{n^2 + n}}{3^{2n} \dotdiv ( 3^n + 1 )} \right \rfloor \bmod 3^n . $$ \end{corollary}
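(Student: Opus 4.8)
The plan is to derive Corollary \ref{CorFormulaF} as a direct specialization of Corollary \ref{CorNaturalLucas}. Recall that the Fibonacci sequence is $U(1,-1)(n)$, so we have $P = 1$ and $Q = -1$. Since $F(n)$ is a sequence of natural numbers, Remark \ref{RemarkNaturalCase} allows us to take $c = 0$, which puts us in the setting of Corollary \ref{CorNaturalLucas}. Substituting $P = 1$ and $Q = -1$ into the formula for $U(P,Q,n)$ there, the denominator $b^{2n} - P b^n + Q$ becomes $b^{2n} - b^n - 1$, and the numerator $b^{n^2+n}$ is unchanged, so we obtain
\[
F(n) = \left\lfloor \frac{b^{n^2+n}}{b^{2n} - b^n - 1} \right\rfloor \bmod b^n
\]
for every sufficiently large integer $b \geq 2$ and every integer $n \geq 1$.

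The next step is to pin down a concrete value of $b$ for which this identity holds, rather than merely ``sufficiently large'' $b$. The natural candidate is $b = 3$, motivated by Lemma \ref{LemmaInequalityF}, which furnishes exactly the growth bound $F(n) < 3^{n-2}$ for every integer $n \geq 3$ that is demanded by condition \ref{Cond7} of Theorem \ref{ThmMethod} (with $m = 3$). I would verify the remaining hypotheses of Theorem \ref{ThmMethod} for the choice $b_1 = 3$: condition \ref{Cond6} requires $3^{-m} < R$, where $R$ is the radius of convergence of $\GF_F(z) = z/(1 - z - z^2)$ at zero; since the dominant pole of this rational function is at $1/\varphi \approx 0.618$, we have $R = 1/\varphi$, and $3^{-3} = 1/27 < R$ holds comfortably. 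The polynomial data feeding Theorem \ref{ThmMethod} is read off directly: with $c = 0$ one has $A_+(z) = z$, $A_-(z) = 0$, $B_+(z) = 1$, $B_-(z) = z + z^2$, so that $h = \deg(B_+ - B_-) = 2$, the irreducibility and the sign condition $B_-(0) = 0 < 1 = B_+(0)$ are immediate, and the expression \eqref{BigArithmeticTerm} reduces to the displayed floor-and-mod term with denominator $3^{2n} - 3^n - 1$.

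To match the exact form stated in the corollary, I would rewrite the denominator using truncated subtraction. The claim is that $3^{2n} - 3^n - 1 = 3^{2n} \dotdiv (3^n + 1)$ for every integer $n \geq 1$; this holds because $3^{2n} \geq 3^n + 1$ in that range, so the truncation is inactive and $\dotdiv$ agrees with ordinary subtraction. This yields
\[
F(n) = \left\lfloor \frac{3^{n^2 + n}}{3^{2n} \dotdiv (3^n + 1)} \right\rfloor \bmod 3^n
\]
for every integer $n \geq 1$. Finally I would check the boundary case $n = 0$ separately by direct evaluation, using the conventions $\lfloor x/0 \rfloor = 0$ and $x \bmod 1 = 0$ recorded in Section \ref{SectionPreliminary}: at $n = 0$ the denominator $3^0 \dotdiv (3^0 + 1) = 1 \dotdiv 2 = 0$, so the floor evaluates to $0$, and $0 \bmod 3^0 = 0 \bmod 1 = 0 = F(0)$, so the identity extends to $n = 0$ as asserted.

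I expect the main obstacle to be the careful handling of the degenerate $n = 0$ case and the verification that $\dotdiv$ coincides with genuine subtraction throughout the relevant range, since these are precisely the points where the arithmetic-term conventions (rather than the high-level Lucas machinery) do the real work; the substitution $P = 1$, $Q = -1$ and the invocation of Corollary \ref{CorNaturalLucas} are routine by comparison.
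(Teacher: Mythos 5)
Your route is the same as the paper's: specialize Corollary \ref{CorNaturalLucas} to $(P,Q)=(1,-1)$ with $c=0$, check the hypotheses of Theorem \ref{ThmMethod} for $b_1=3$ and $m=3$ using the radius of convergence $\varphi-1=1/\varphi$ of $z/(1-z-z^2)$ together with the bound of Lemma \ref{LemmaInequalityF}, and handle the degenerate case $n=0$ via the conventions $\lfloor x/0\rfloor=0$ and $x\bmod 1=0$. The identification of the polynomial data $A_+(z)=z$, $A_-(z)=0$, $B_+(z)=1$, $B_-(z)=z+z^2$, $h=2$, and the observation that $\dotdiv$ coincides with ordinary subtraction in the denominator for $n\geq 1$, are all correct.

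There is, however, one small but genuine gap. After invoking Theorem \ref{ThmMethod} with $m=3$, you assert the identity ``for every integer $n\geq 1$'', but conditions \ref{Cond6} and \ref{Cond7} with $m=3$ only deliver it for $n\geq m=3$; the alternative conditions \ref{CondA} and \ref{CondB}, which would cover all $n\geq 1$ at once, require $b_2\geq 8$ and so cannot be applied to $b=3$. Consequently the cases $n=1$ and $n=2$ need the same kind of direct verification that you carry out for $n=0$. The paper does exactly this (``a simple computation reveals that the statement also holds if $0\leq n\leq 2$''), and the checks do succeed: $\lfloor 9/5\rfloor\bmod 3=1=F(1)$ and $\lfloor 729/71\rfloor\bmod 9=10\bmod 9=1=F(2)$. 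So the gap is trivially closed, but as written your argument leaves these two values uncovered.
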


\begin{proof} {Corollary \ref{CorGFLucas}} yields that $ u(1, -1, z) = z / ( 1 - z - z^2 ) $, from which is easy to see that the radius of convergence at zero of this generating function is $ \varphi - 1 $.

Corollary \ref{CorNaturalLucas} shows that we can write $ U ( b , 0 , 1 , - 1 , n ) $ as in the statement, so it only remains to check whether taking $ b = 3 $ satisfies the conditions of Theorem \ref{ThmMethod} or not. But, certainly, $ 0.03703 \approx 3^{- 3} < \varphi - 1 \approx 0.61803 $ and, by applying Lemma \ref{LemmaInequalityF}, $ F(n) < 3^{n - 2} $ for every integer $ n \geq 3 $.

Finally, a simple computation reveals that the statement also holds if $ 0 \leq n \leq 2 $ (for the case $ n = 0 $, we apply the conventions $ \left\lfloor x / 0 \right\rfloor = 0 $ and $ x \bmod 1 = 0 $ that we mentioned in Section \ref{SectionPreliminary}). \end{proof} 

{Corollary \ref{CorFormulaFExtra}} can be proved in a similar way, by applying Lemma \ref{LemmaInequalityF}.

{\begin{corollary} \label{CorFormulaFExtra} If $ n $ is an integer exceeding one, then $$ F(n) = \left \lfloor \frac{2^{n^2 + n}}{2^{2n} - 2^n - 1} \right \rfloor \bmod 2^n . $$ \end{corollary}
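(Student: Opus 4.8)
The plan is to mirror the proof of Corollary \ref{CorFormulaF}, replacing the base $b = 3$ by $b = 2$ and adjusting the threshold at which Lemma \ref{LemmaInequalityF} becomes applicable. As there, the Fibonacci sequence is $U(1,-1)(n)$, so Corollary \ref{CorGFLucas} gives $u(1,-1,z) = z/(1 - z - z^2)$, whose radius of convergence at zero is $R = \varphi - 1 \approx 0.618$. Setting $P = 1$, $Q = -1$, $c = 0$ and $b = 2$ in Corollary \ref{CorNaturalLucas}, the right-hand side $U(2,0,1,-1,n)$ is exactly
$$ \left\lfloor \frac{2^{n^2+n}}{2^{2n} - 2^n - 1}\right\rfloor \bmod 2^n , $$
so it suffices to check that the choice $b = 2$ meets the hypotheses of Theorem \ref{ThmMethod}.

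Next I would verify conditions \ref{Cond6} and \ref{Cond7} of Theorem \ref{ThmMethod} with $b_1 = 2$ and $m = 4$. Condition \ref{Cond6} requires $2^{-m} < R$, which holds because $2^{-4} = 0.0625 < \varphi - 1$. Condition \ref{Cond7} requires $F(n) < 2^{n-2}$ for every integer $n \geq m$, which is precisely the second assertion of Lemma \ref{LemmaInequalityF}, valid for $n \geq 4$. Hence Theorem \ref{ThmMethod} yields the stated identity for every integer $n \geq 4$.

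The remaining work, and the point requiring the most care, is the gap $n \in \{2, 3\}$, which falls below the threshold $m = 4$. One cannot lower $m$ to $3$, because $F(3) = 2 = 2^{3-2}$ violates the strict inequality of condition \ref{Cond7}; this is also what forces the hypothesis to read ``exceeding one'' rather than ``non-negative'', since at $n = 1$ the denominator equals $1$ and the expression returns $0 \neq F(1)$. I would therefore conclude with a direct numerical check of the two boundary cases: for $n = 2$ the expression is $\left\lfloor 64/11 \right\rfloor \bmod 4 = 5 \bmod 4 = 1 = F(2)$, and for $n = 3$ it is $\left\lfloor 4096/55 \right\rfloor \bmod 8 = 74 \bmod 8 = 2 = F(3)$. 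Combining these with the range $n \geq 4$ supplied by Theorem \ref{ThmMethod} establishes the identity for every integer $n \geq 2$.
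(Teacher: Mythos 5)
Your proposal is correct and follows exactly the route the paper intends: it mirrors the proof of Corollary \ref{CorFormulaF} with $b=2$, invokes the second assertion of Lemma \ref{LemmaInequalityF} (valid for $n\geq 4$), and settles the boundary cases $n\in\{2,3\}$ by direct computation, all of which checks out. The paper itself only remarks that the result ``can be proved in a similar way, by applying Lemma \ref{LemmaInequalityF}'', so your write-up is a faithful (and more detailed) execution of that same argument.
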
}

\subsection{Some {Lucas sequences of non-negative terms}}

In this subsection, we report similar results about some other {Lucas sequences of non-negative terms}, both of the first and of the second kind. Some of these representations are written down explicitly.

\begin{example}\label{ExFormulaLucas} (\href{https://oeis.org/A000032}{\texttt{OEIS A000032}}, \textbf{Lucas numbers})  If $ n $ is a positive integer, then $$ V(1, -1, n) = V(5, 0, 1, -1, n) = \left \lfloor \frac{2 \cdot 5^{n^2 + 2n} - 5^{n^2 + n}}{5^{2n} - 5^n - 1} \right \rfloor \bmod 5^n . $$ \end{example}

From these formulas it is easy to construct arithmetic terms which represent the corresponding sequences (for every non-negative integer). For example, from Example \ref{ExFormulaLucas}, it is straightforward that {the sequence $ V ( 1 , - 1 ) ( n ) $} can be represented by the arithmetic term $$  2( 1 \dotdiv n ) + \left \lfloor \frac{2 \cdot 5^{n^2 + 2n} \dotdiv 5^{n^2 + n}}{5^{2n} \dotdiv ( 5^n + 1 )} \right \rfloor \bmod 5^n . $$ Note that here we applied the conventions $ \left\lfloor x / 0 \right\rfloor = 0 $ and $ x \bmod 1 = 0 $ that we mentioned in Section \ref{SectionPreliminary}. We also apply them in Example \ref{ExFormulaNaturals} and in Example \ref{ExFormulaMersenne}.

\begin{example} (\href{https://oeis.org/A000129}{\texttt{OEIS A000129}}, \textbf{Pell numbers}) If $ n $ is a non-negative integer, then $ U(2, -1, n) = U(3, 0, 2, -1, n) $. \end{example}

\begin{example} (\href{https://oeis.org/A002203}{\texttt{OEIS A002203}}, \textbf{Pell-Lucas numbers}) If $ n $ is a positive integer, then $ V(2, -1, n) = V(9, 0, 2, -1, n) $. \end{example}

\begin{example}\label{ExFormulaNaturals} (\href{https://oeis.org/A001477}{\texttt{OEIS A001477}}, sequence of natural numbers) If $ n $ is a non-negative integer, then $$ U(2, 1, n) = U(4, 0, 2, 1, n) = \left \lfloor \frac{2^{2 n^2 + 2 n}}{2^{4 n} - 2^{2 n + 1} + 1} \right \rfloor \bmod 2^{2 n} = n . $$ \end{example}

\begin{example} \label{ExFormulaTwos} (\href{https://oeis.org/A007395}{\texttt{OEIS A007395}}, all-twos sequence) If $ n $ is a positive integer, then $$ V(2, 1, n) = V(4, 0, 2, 1, n) = \left \lfloor \frac{2^{2 n^2 + 2 n + 1}}{2^{2 n} - 1} \right \rfloor \bmod 2^{2 n} = 2 . $$ \end{example}

{Note that,} in the OEIS, the first argument of the all-twos sequence is set as one instead of as zero.

\begin{example} (\href{https://oeis.org/A001045}{\texttt{OEIS A001045}}, \textbf{Jacobsthal numbers})
 If $ n $ is a non-negative integer, then $ U(1, -2, n) = U(4, 0, 1, -2, n) $. \end{example}

\begin{example} (\href{https://oeis.org/A014551}{\texttt{OEIS A014551}}, \textbf{Jacobsthal-Lucas numbers}) If $ n $ is a positive integer, then $ V(1, -2, n) = V(7, 0, 1, -2, n) $. \end{example}

\begin{example}\label{ExFormulaMersenne} (\href{https://oeis.org/A000225}{\texttt{OEIS A000225}}, \textbf{Mersenne numbers}) If $ n $ is a non-negative integer, then $$ U(3, 2, n) = U(6, 0, 3, 2, n) = \left \lfloor \frac{6^{n^2 + n}}{6^{2n} - 3 \cdot 6^n + 2} \right \rfloor \bmod 6^n = 2^n - 1 . $$ \end{example}

\begin{example} \label{ExFormula2n+1} (\href{https://oeis.org/A000051}{\texttt{OEIS A000051}}, {sequence $ 2^n + 1 $}) If $ n $ is a positive integer, then $$ V(3, 2, n) = V(7, 0, 3, 2, n) = \left \lfloor \frac{2 \cdot 7^{n^2 + 2n} - 3 \cdot 7^{n^2 + n}}{7^{2n} - 3 \cdot 7^n + 2} \right \rfloor \bmod 7^n = 2^n + 1 . $$ \end{example}

\subsection{Other Lucas sequences} 

In this subsection, we apply Method \ref{MainMethod} to obtain formulas for two Lucas sequences that take both positive and negative values.

Once again, the proofs can be recreated by following Method \ref{MainMethod}.

\begin{example} (\href{https://oeis.org/A088137}{\texttt{OEIS A088137}}, \textbf{generalized Gaussian Fibonacci integers}) If $ n $ is a positive integer, then $ U(2,3,n) = U(32, 3, 2, 3, n) = $ $$ \left \lfloor \frac{3 \cdot 32^{n^2 + 3n} - 5 \cdot 32^{n^2 + 2n} + 6 \cdot 32^{n^2 + n}}{32^{3n} - 5\cdot 32^{2n} + 9 \cdot 32^n - 9} \right \rfloor \bmod 32^n - 3^{n+1} . $$ \end{example}

\begin{example} (\href{https://oeis.org/A002249}{\texttt{OEIS A002249}}) If $ n $ is a positive integer, then $$
V(1,2,n) = V(8, 2, 1, 2, n) = \left \lfloor \frac{4 \cdot 8^{n^2 + 3n} - 7 \cdot 8^{n^2 + 2n} + 6 \cdot 8^{n^2 + n}}{8^{3n} - 3\cdot 8^{2n} + 4 \cdot 8^n - 4} \right \rfloor \bmod 8^n - 2^{n+1} . $$ \end{example}

\section{Other C-recursive sequences} \label{SectionOther}

In this section we continue studying important C-recursive sequences that are not Lucas sequences. In the first subsection, we look at the sequences of solutions of a Pell equation. Taken apart, both the sequence of the $x$-values and the sequence of the $y$-values prove to be C-recursive, so they enjoy arithmetic-term representations. And, in the second subsection, we apply Method \ref{MainMethod} to some famous C-recursive {sequences} of order three.

\subsection{Pell's equation}

Consider a non-square integer {$ k \geq 1 $}. The Diophantine equation $ X^2 - k Y^2 = 1 $ is known as \textbf{Pell's equation} (see {Barbeau \cite[Preface]{Barbeau}). Let} $ S $ be the set of solutions $ ( X , Y ) $ in $ \mathbb{N}^2 $ of Pell's equation, which is known to be infinite (see {Grigorieva \cite[Theorem 24]{Grigorieva}). Let $ x ( n ) $ and $ y ( n ) $} be the sequences such that $ \{ ( x ( n ) , y ( n ) ) : n \in \mathbb{N} \} = S $, $ ( x ( 0 ) , y ( 0 ) ) = ( 1 , 0 ) $ and $ x $ is strictly {increasing. The} solution $(x(1), y(1))$ is called {\bf fundamental} because all the other solutions can be computed from it, as Theorem \ref{ThmPellSolutions} shows.

\begin{theorem}\label{ThmPellSolutions} (Barbeau \cite[Section 4.2]{Barbeau}, Rosen \cite[Theorem 13.12]{Rosen}) If $ n $ is a positive integer, then $$ x(n) \pm y(n) \sqrt{k} = {\left ( x(1) \pm y(1) \sqrt{k} \right ) }^n . $$ \end{theorem}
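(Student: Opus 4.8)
The plan is to work inside the quadratic ring $\mathbb{Z}[\sqrt{k}]$ and exploit the multiplicativity of the norm $N(a + b\sqrt{k}) = a^2 - k b^2$. Writing $\epsilon = x(1) + y(1)\sqrt{k}$ for the fundamental solution, the equation $x(1)^2 - k y(1)^2 = 1$ says exactly that $N(\epsilon) = 1$; and since $k$ is a non-square, $\sqrt{k}$ is irrational, so each power $\epsilon^n$ can be written uniquely as $a_n + b_n\sqrt{k}$ with $a_n, b_n \in \mathbb{Z}$. Multiplicativity gives $N(\epsilon^n) = N(\epsilon)^n = 1$, i.e.\ $a_n^2 - k b_n^2 = 1$, so every $(a_n, b_n)$ solves Pell's equation; and since $\epsilon > 1$ with $x(1), y(1) \geq 1$, one checks that $a_n, b_n \geq 0$, so $(a_n, b_n) \in S$.

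First I would record the two facts that drive the argument. Conjugation $a + b\sqrt{k} \mapsto a - b\sqrt{k}$ is a ring automorphism of $\mathbb{Z}[\sqrt{k}]$ preserving the norm, and $N(\epsilon) = 1$ forces $\overline{\epsilon} = \epsilon^{-1}$; hence $a_n = (\epsilon^n + \epsilon^{-n})/2$, and a short computation using $\epsilon > 1$ shows $a_{n+1} - a_n = \tfrac{1}{2}(\epsilon - 1)(\epsilon^n - \epsilon^{-n-1}) > 0$, so $(a_n)$ is strictly increasing with $a_0 = 1$. Thus the solutions $(a_n, b_n)$ are listed in strictly increasing order of their first coordinate.

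The heart of the proof is to show that these are all the solutions in $\mathbb{N}^2$, and here I would invoke the minimality of the fundamental solution. Given any solution $(x, y) \in S$ with $x > 1$, set $\eta = x + y\sqrt{k} > 1$; since $\epsilon^n \to \infty$ there is a unique $n \geq 1$ with $\epsilon^n \leq \eta < \epsilon^{n+1}$. The element $\delta := \eta\,\epsilon^{-n}$ again has norm $1$ and satisfies $1 \leq \delta < \epsilon$; writing $\delta = u + w\sqrt{k}$ and using $\overline{\delta} = \delta^{-1} \in (0,1]$, one gets $u = (\delta + \overline{\delta})/2 > 0$ and $w\sqrt{k} = (\delta - \overline{\delta})/2 \geq 0$, so $u \geq 1$ and $w \geq 0$. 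If $w \geq 1$, then $(u, w)$ would be a solution with $1 < \delta < \epsilon$, contradicting the fact that $\epsilon = x(1) + y(1)\sqrt{k}$ is the least solution exceeding one; hence $w = 0$, forcing $\delta = 1$ and $\eta = \epsilon^n$.

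I expect this minimality step to be the main obstacle, since it requires pinning down that $\epsilon$ really is the smallest norm-one element above one with non-negative coordinates — precisely the content of the fundamental solution being fundamental, which follows because the map $x \mapsto \sqrt{(x^2-1)/k}$ makes $\eta = x + y\sqrt{k}$ increase with $x$. Granting it, the enumerations $(x(n), y(n))$ and $(a_n, b_n)$ both list all of $S$ in strictly increasing order of first coordinate and agree at $n = 0$, so they coincide termwise, yielding $x(n) + y(n)\sqrt{k} = \epsilon^n$. Applying conjugation then gives $x(n) - y(n)\sqrt{k} = \overline{\epsilon}^n = (x(1) - y(1)\sqrt{k})^n$, and together these two identities are exactly the claimed $\pm$ statement.
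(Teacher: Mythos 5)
Your proof is correct, but note that the paper does not prove this statement at all: it is quoted as a classical result with citations to Barbeau (Section 4.2) and Rosen (Theorem 13.12), so there is no internal proof to compare against. Your argument --- multiplicativity of the norm on $\mathbb{Z}[\sqrt{k}]$ to show each $\epsilon^n$ yields a solution, monotonicity of $a_n=(\epsilon^n+\epsilon^{-n})/2$, and the descent $\delta=\eta\,\epsilon^{-n}\in[1,\epsilon)$ forcing $\delta=1$ via minimality of the fundamental solution --- is precisely the standard proof found in those references, and all the delicate points (non-negativity of $u,w$ via $\overline{\delta}=\delta^{-1}$, and the fact that ordering solutions by $x$ agrees with ordering by $x+y\sqrt{k}$) are handled correctly.
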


Theorem \ref{ThmPellRecurrence} provides a connection of Pell's equation with the theory of C-recursive sequences. We have seen this result proven for various particular values of the parameter $ k $ (see, for example, K\v{r}\'{i}\v{z}ek et al.\ \cite[Theorem 8.10]{KrizekEtAl2} for $ k = 3 $), but we could not find a reference for the general result. For another interesting connection between Pell's equation and Lucas sequences, see Jones \cite{Jones}.

\begin{theorem}\label{ThmPellRecurrence} If $ n $ is a non-negative integer and {$ s ( n ) \in \{ x ( n ) , y ( n ) \} $}, then $$ s ( n + 2 ) = 2 x ( 1 ) s ( n + 1 ) - s ( n ) . $$ \end{theorem}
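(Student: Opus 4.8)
The plan is to diagonalize the two sequences via Theorem \ref{ThmPellSolutions}. First I would set $\alpha = x(1) + y(1)\sqrt{k}$ and $\beta = x(1) - y(1)\sqrt{k}$, so that Theorem \ref{ThmPellSolutions} reads $x(n) + y(n)\sqrt{k} = \alpha^n$ and $x(n) - y(n)\sqrt{k} = \beta^n$ for every positive integer $n$. Solving this linear system gives the closed forms
$$ x(n) = \frac{\alpha^n + \beta^n}{2}, \qquad y(n) = \frac{\alpha^n - \beta^n}{2\sqrt{k}} . $$
I would then check that these two identities remain valid at $n = 0$ as well, using the initial data $(x(0), y(0)) = (1, 0)$ together with $\alpha^0 = \beta^0 = 1$; this secures the base case $n = 0$ that Theorem \ref{ThmPellSolutions} leaves out.

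The key observation is that $\alpha$ and $\beta$ are the two roots of a single monic quadratic with integer coefficients. Indeed, $\alpha + \beta = 2 x(1)$, while $\alpha \beta = x(1)^2 - k\, y(1)^2 = 1$ precisely because $(x(1), y(1))$ is a solution of Pell's equation. Hence both $\alpha$ and $\beta$ satisfy $t^2 = 2 x(1)\, t - 1$, which yields $\alpha^{n + 2} = 2 x(1)\, \alpha^{n + 1} - \alpha^n$ and $\beta^{n + 2} = 2 x(1)\, \beta^{n + 1} - \beta^n$ for every non-negative integer $n$.

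Finally, because the recurrence $w(n + 2) = 2 x(1)\, w(n + 1) - w(n)$ is linear and homogeneous, any fixed linear combination of the two solutions $( \alpha^n )_n$ and $( \beta^n )_n$ is again a solution. Applying this to the closed forms above, with coefficients $(1/2, 1/2)$ in the case of $x(n)$ and $(1/(2\sqrt{k}), -1/(2\sqrt{k}))$ in the case of $y(n)$, shows that both $x(n)$ and $y(n)$ satisfy the claimed recurrence, which settles the statement for $s(n) \in \{ x(n), y(n) \}$.

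The only real obstacle is bookkeeping rather than depth: one must separately confirm the $n = 0$ instance, which Theorem \ref{ThmPellSolutions} does not cover, and verify that $\alpha \beta = 1$, this being exactly the point where the hypothesis that $(x(1), y(1))$ lies on the Pell conic enters. The presence of the irrational $\sqrt{k}$ in the formula for $y(n)$ is harmless, since the recurrence is a linear identity that is preserved under any real-linear combination of solutions.
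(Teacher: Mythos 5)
Your proof is correct and follows essentially the same route as the paper's: express $x(n)$ and $y(n)$ as linear combinations of $\alpha^n$ and $\beta^n$ via Theorem \ref{ThmPellSolutions}, observe that $\alpha$ and $\beta$ are the roots of $X^2 - 2x(1)X + 1$ (the Pell hypothesis giving $\alpha\beta = 1$), and conclude by linearity of the recurrence. Your explicit verification of the $n = 0$ case, which Theorem \ref{ThmPellSolutions} does not cover, is a small point of extra care that the paper's proof leaves implicit.
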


\begin{proof} The two roots $ \alpha $ and $ \beta $ of the polynomial $ X^2 - 2 x ( 1 ) X + 1 $ are $ x ( 1 ) + \sqrt{x ( 1 )^2 - 1} $ and $ x ( 1 ) - \sqrt{x ( 1 )^2 - 1} $, respectively.

Because $ ( x ( 1 ) , y ( 1 ) ) $ is a solution of Pell's equation, the numbers $ \alpha $ and $ \beta $ can be written as $ x ( 1 ) + y ( 1 ) \sqrt{k} $ and $ x ( 1 ) - y ( 1 ) \sqrt{k} $, respectively.

And, by applying Theorem \ref{ThmPellSolutions}, we have that $$x(n) = \frac{\alpha^n + \beta^n}{2}, \,\,\,\,
y(n) = \frac{\alpha^n - \beta^n}{2 \sqrt{k}}.$$

Now, note that, if $ \gamma \in \{ \alpha , \beta \} $, then $ \gamma^2 - 2 x ( 1 ) \gamma + 1 = 0 $ and, consequently, $ \gamma^{n + 2} - 2 x ( 1 ) \gamma^{n + 1} +   \gamma^n = 0 $. Therefore, because of the linearity of the corresponding \textit{recurrence operator} (see Sauras-Altuzarra \cite[Section 1.2]{SaurasAltuzarra}), the conclusion follows. \end{proof}

{\begin{corollary} \label{CorPellFormulas} For every sufficiently large integer $ b \geq 2 $, the following identities hold for every integer $n \geq 1$: \begin{eqnarray*} x ( n ) &=& \left \lfloor \frac{b^{n^2 + 2n} - x ( 1 ) b^{n^2 + n}}{b^{2n} - 2 x ( 1 ) b^n + 1} \right \rfloor \bmod b^n , \\ y ( n ) &=& \left \lfloor \frac{y ( 1 ) b^{n^2 + n}}{b^{2n} - 2 x ( 1 ) b^n + 1} \right \rfloor \bmod b^n . \end{eqnarray*} \end{corollary}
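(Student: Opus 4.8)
The plan is to apply Method \ref{MainMethod} (equivalently, Theorem \ref{ThmMethod}) to the sequences $x(n)$ and $y(n)$ separately, exactly as was done for the Lucas sequences in Corollary \ref{CorNaturalLucas}. The crucial observation is that both sequences consist of \emph{natural numbers}: by definition $S \subseteq \mathbb{N}^2$, so $x(n) \geq 0$ and $y(n) \geq 0$ for every $n$. This means we may take $c = 0$ (see Remark \ref{RemarkNaturalCase}), so that $t(n) = s(n)$ and the subtracted term $c^{n+1}$ vanishes, which is why no such term appears in the two displayed formulas.

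First I would compute the generating functions. By Theorem \ref{ThmPellRecurrence}, both $x(n)$ and $y(n)$ satisfy the recurrence $s(n+2) = 2x(1) s(n+1) - s(n)$, i.e.\ $s(n+2) - 2x(1) s(n+1) + s(n) = 0$, so the denominator polynomial is $B(z) = 1 - 2x(1) z + z^2$ in both cases. Using the method in the proof of Proposition \ref{PropCharacterization} with the initial data $(x(0), x(1)) = (1, x(1))$ and $(y(0), y(1)) = (0, y(1))$, I would read off the numerators: for $x$ one gets $A_x(z) = 1 - x(1) z$ (since $a(0) = x(0) = 1$ and $a(1) = x(1) + \alpha_1 x(0) = x(1) - 2x(1) = -x(1)$), giving
$$ \GF_x(z) = \frac{1 - x(1) z}{1 - 2x(1) z + z^2}, $$
and for $y$ one gets $A_y(z) = y(1) z$ (since $a(0) = y(0) = 0$ and $a(1) = y(1)$), giving
$$ \GF_y(z) = \frac{y(1) z}{1 - 2x(1) z + z^2}. $$
Substituting these rational generating functions into the formula $\lfloor b^{n^2} \GF_s(b^{-n}) \rfloor \bmod b^n$ from Theorem \ref{ThmExtraction1} and clearing the powers of $b^{-n}$ (multiplying numerator and denominator by $b^{2n}$, the square of the highest power matching $h = 2 = \deg B$) produces exactly the two displayed arithmetic terms, with numerators $b^{n^2+2n} - x(1) b^{n^2+n}$ and $y(1) b^{n^2+n}$ respectively over the common denominator $b^{2n} - 2x(1) b^n + 1$.

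It then remains to verify that the hypotheses of Theorem \ref{ThmMethod} are met for all sufficiently large $b$, which is precisely what the phrase ``for every sufficiently large integer $b \geq 2$'' in the statement allows me to assume. The radius of convergence $R$ is the reciprocal of the dominant root $\alpha = x(1) + \sqrt{x(1)^2 - 1}$, hence positive; condition \ref{Cond6} ($b^{-m} < R$) holds once $b$ is large enough, and the growth conditions \ref{Cond7} (and the analogue \ref{CondB}) hold because both $x(n)$ and $y(n)$ grow like $\alpha^n$, so $s(n) < b^{n-2}$ eventually once $b > \alpha$. The irreducibility of the fractions and the positivity conditions $B_-(0) < B_+(0)$ and $A_+ > A_-$, $B_+ > B_-$ on $[0, R)$ follow from the fact that these generating functions are genuinely of order two (the numerator and denominator share no root, since the sequences are not eventually geometric of lower order) together with the positivity of $\GF_s(z)$ on $[0,R)$, which holds because all terms are non-negative and at least $x(0) = 1$, resp.\ $y(1) \geq 1$, is positive. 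The main obstacle, and the only genuinely non-routine point, is confirming that the two fractions are irreducible so that Proposition \ref{PropCharacterization} applies with $h = 2$; this amounts to checking that the numerator polynomials $1 - x(1) z$ and $y(1) z$ do not share a root with $1 - 2x(1)z + z^2$, which is immediate since the latter has roots $\alpha, \beta$ with $\alpha\beta = 1$ and $\alpha + \beta = 2x(1) \geq 2$, none of which is a root of the respective linear numerators.
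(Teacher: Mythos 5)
Your proposal is correct and follows essentially the same route as the paper: the paper's proof simply states the two generating functions $\GF_x(z) = (1 - x(1)z)/(1 - 2x(1)z + z^2)$ and $\GF_y(z) = y(1)z/(1 - 2x(1)z + z^2)$ (which you derive correctly via Proposition \ref{PropCharacterization}) and declares that the conclusion follows from the general method. You merely fill in the routine verification of the hypotheses of Theorem \ref{ThmMethod} (non-negativity so $c = 0$, irreducibility, positivity, growth bounds) that the paper leaves implicit.
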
}

\begin{proof} From the proof of Theorem \ref{ThmPellRecurrence}, it is easy to deduce that \begin{eqnarray*} \GF_x(z) &=& \frac{1 - x ( 1 ) z}{1 - 2 x ( 1 ) z + z^2} \\ \GF_y(z) &=& \frac{y ( 1 ) z}{1 - 2 x ( 1 ) z + z^2} \end{eqnarray*} and, consequently, the conclusion follows. \end{proof}

\begin{example} (\href{https://oeis.org/A001081}{\texttt{OEIS A001081}}, \href{https://oeis.org/A001080}{\texttt{OEIS A001080}}) If $ k = 7 $, then the fundamental solution is $ ( X , Y ) = ( 8 , 3 ) $ so $$ x ( n ) = \left \lfloor \frac{143^{n^2 + 2n} - 8 \cdot 143^{n^2 + n}}{143^{2n} - 16\cdot 143^{n} + 1} \right \rfloor \bmod 143^n $$ for every integer $ n \geq 1 $  and $$ y ( n ) = \left \lfloor \frac{3 \cdot 2^{6 n^2 + 6 n}}{2^{12 n} - 2^{6 n + 4} + 1} \right \rfloor \bmod 2^{6 n} $$ for every integer $ n \geq 0 $. Notice that the last expression was obtained from the formula for $y(n)$ given in {Corollary \ref{CorPellFormulas}} with $b = 64$. \end{example}

\subsection{Some C-recursive sequences of order three}

\begin{example} (\href{https://oeis.org/A000073}{\texttt{OEIS A000073}}, \textbf{Tribonacci numbers}) The sequence is defined by the recurrence $s(0) = s(1) = 0$, $s(2) = 1$ and $$ s(n) = s(n-1) + s(n-2) + s(n-3) $$ for every integer $ n \geq 3 $. {Its} generating function is $$ \frac{z^2}{1 - z - z^2 - z^3} . $$ For every integer $ n \geq 0 $, the $ n $-th Tribonacci number is $$ \left \lfloor \frac{2^{n^2 + n}}{2^{3n} - 2^{2n} - 2^n - 1} \right \rfloor \bmod 2^n . $$ \end{example}

\begin{example} (\href{https://oeis.org/A000931}{\texttt{OEIS A000931}},\textbf{Padovan numbers}) The sequence is defined by the recurrence $s(0) = 1$, $s(1) = s(2) = 0$ and $$ s(n) = s(n-2) + s(n-3) $$ for every integer $ n \geq 3 $. Its generating function is $$ \frac{1-z^2}{1-z^2-z^3} . $$ For every integer $ n \geq 1 $, the $ n $-th Padovan number is $$ \left \lfloor \frac{2^{n^2 +3 n} - 2^{n^2 + n} }{2^{3n} - 2^n - 1} \right \rfloor \bmod 2^n . $$ \end{example}

\begin{example} (\href{https://oeis.org/A000930}{\texttt{OEIS A000930}}, \textbf{Narayana's cows sequence}) The sequence is defined by the recurrence $s(0) = s(1) = s(2) = 1$ and $$ s(n) = s(n-1) + s(n-3) $$ for every integer $ n \geq 3 $. Its generating function is
 $$ \frac{1}{1-z-z^3} .$$
For every integer $ n \geq 1 $, the $ n $-th term of the Narayana's cows sequence is $$ \left \lfloor \frac{2^{n^2 +3 n} }{2^{3n} - 2^{2n} - 1} \right \rfloor \bmod 2^n . $$ \end{example}

\begin{example} (\textbf{Fibonacci convolution sequence}) Given an integer $ r \geq 0 $, the $ r $-th Fibonacci convolution sequence is that whose generating function is $$ \left( \frac{z}{1 - z - z^2} \right)^{r + 1} $$ (see Bicknell-Johnson \& Hoggatt \cite[Section 1]{BicknellJohnsonHoggatt}).

For example, the zeroth Fibonacci convolution sequence is the Fibonacci sequence.

If $ n $ is a non-negative integer and $ ( r , b ) \in \{ ( 1 , 4 ) , ( 2 , 2 ) , ( 3 , 3 ) , ( 4 , 3 ) \} $, then the $ n $-th term of the $ r $-th Fibonacci convolution sequence is $$ \left \lfloor \dfrac{b^{n^2 + r n + n}}{{(b^{2 n} - b^{n} - 1)}^{r + 1}} \right \rfloor \bmod b^n . $$ 

The first Fibonacci convoluted sequence is \href{https://oeis.org/A001629}{\texttt{OEIS A001629}}, while \href{https://oeis.org/A001628}{\texttt{OEIS A001628}}, \href{https://oeis.org/A001872}{\texttt{OEIS A001872}} and \href{https://oeis.org/A001873}{\texttt{OEIS A001873}} are the sequences of positive terms of the second, third and fourth Fibonacci convoluted sequence, respectively.
\end{example}

\section{Conclusions} \label{SectionConclusions}

\begin{enumerate}
    \item Arithmetic terms are closed forms involving integer numbers only: the evaluation of an arithmetic term consists of performing a fixed number (i.e.\ a number that is independent of the variables of the term) of arithmetic operations in a given order (see Section \ref{SectionPreliminary} {and Section \ref{SectionHypergeometric}}).
    \item Given a non-zero C-recursive integer sequence {$ s ( n ) $} such that the coefficients of its recurrence formula are rational, Method \ref{MainMethod} computes two integers $ b \geq 2 $ and $ c \geq 0 $ and an arithmetic term $ E ( x , y ) $ such that \begin{equation} \label{ArithmeticTermRepresentation} s ( n ) = E ( n , b ) - c^{n + 1} \end{equation} for every {integer $ n \geq 1 $}.
    \item If no term of {$ s ( n ) $} is negative, then $ c $ can be set as zero (see Remark \ref{RemarkNaturalCase}).
    \item If $ t ( n ) := s ( n ) + c^{n + 1} $, then {$ \GF_t ( z ) $} is a rational function and the value $ E ( n , b ) $ coincides with $$ \left \lfloor b^{n^2} \GF_t ( b^{- n} ) \right \rfloor \bmod b^n $$ (see the proof of Theorem \ref{ThmExistence}).
\end{enumerate}

{\section{Future work} \label{SectionFutureWork}}

{The arithmetic terms outputted by Method \ref{MainMethod} are not necessarily optimal in length. Examples \ref{ExFormulaNaturals}, \ref{ExFormulaTwos}, \ref{ExFormulaMersenne} and \ref{ExFormula2n+1} already exhibit this deficiency. As another example, consider the sequence $ y ( n ) $ that is defined by the recurrence $ y(1) = 1 $, $ y(2) = 2 $, $ y(3) = 2 $, $ y(4) = 3 $, $ y(5) = 2 $, $ y(6) = 3 $, $ y(7) = 3 $ and $$ y ( n + 7 ) = y ( n + 6 ) + y ( n + 1 ) - y ( n ) $$ for every integer $ n \geq 1 $. Method \ref{MainMethod} yields the identity $$ y ( n ) = \left\lfloor \frac{2^{n^2+5n-6}+2^{n^2+4n-5}+2^{n^2+2n-3}-2^{n^2+n-2}+2^{n^2-1}-2^{n^2-n}}{2^{7n-7}-2^{6n-6}-2^{n-1}+1} \right\rfloor \bmod{2^{n-1}} , $$ which holds for every integer $ n \geq 3 $, but it is known that $$ y ( n ) = \left\lfloor \frac{n}{2} \right\rfloor - \left\lfloor \frac{n + 1}{6} \right\rfloor + 1 $$ for every integer $ n \geq 1 $ (see \href{https://oeis.org/A103469}{\texttt{OEIS A103469}}). Therefore, one future goal is to develop a system that computes the shortest arithmetic-term representation for a given C-recursive sequence.}

{As commented in Section \ref{SectionHypergeometric}, Mazzanti's method of arithmetic-term calculation admits any input from the ``huge'' class of Kalmar functions, but produces outputs that are typically too large for practical use. Another goal is then to extend the ad-hoc techniques for the ``small'' class of C-recursive sequences, such as Method \ref{MainMethod} or the one devised by Prunescu \cite{Prunescu}, to slightly wider classes, such as the class of \textit{holonomic sequences} or even the class of $ {\textrm{C}}^2 $\textit{-finite sequences} (see Jiménez-Pastor et al.\ \cite{JimenezPastorEtAl}).}

{Finally, consider the problem ``\textit{Given a sequence $ y ( n ) $ (in a class $ \mathcal{A} $ of sequences), decide constructively the existence of a sequence $ x ( n ) $ (in a class $ \mathcal{B} $ of sequences) such that $ x ( n + 1 ) - x ( n ) = y ( n ) $.}'', which is known as the \textbf{Telescoping Problem} (cf.\ Sauras-Altuzarra \cite[Section 5.3]{SaurasAltuzarra} and Schneider \cite[Section 2]{Schneider}). The Telescoping Problem is a summation problem, because its resolution yields an identity of the form $$ \sum_{k = a}^b y ( k ) = x ( b + 1 ) - x ( a ) , $$ for some integers $ a $ and $ b $ such that $ 0 \leq a \leq b $. As the present paper revolves around differences of arithmetic terms, the following instance of the Telescoping Problem might be another natural question: ``\textit{Given an arithmetic term $ y ( n ) $, decide constructively the existence of an arithmetic term $ x ( n ) $ such that $ x ( n + 1 ) - x ( n ) = y ( n ) $.}''.}

\section{Acknowledgments}

The authors thank the anonymous referees for their attentive review and their very interesting comments, which helped, in particular, to greatly improve the presentation of the theory.

The second author was partially supported by FWF Austria (project number P 36571-N). Both authors were partially supported by Bitdefender (Research in Pairs in Bucharest Program).

\end{document}